\newcommand{\R}{\mathbb{R}}
\newcommand{\C}{\mathcal{C}}
\newcommand{\fot}{\frac{1}{2}}
\newtheorem{theorem}{Theorem}
\newtheorem{assumption}{Assumption}
\newtheorem{lemma}{Lemma}
\newcommand{\pa}{\partial}
\newcommand{\proj}{{\rm Proj}}
\newcommand{\sumjn}{\sum_{j=1}^N}
\def\BibTeX{{\rm B\kern-.05em{\sc i\kern-.025em b}\kern-.08em
    T\kern-.1667em\lower.7ex\hbox{E}\kern-.125emX}}
\begin{document}
\title{Safe Adaptive Multi-Agent Coverage Control}
\author{Yang~Bai$^{1}$, 
        Yujie~Wang$^{2}$,
        Xiaogang~Xiong$^{3}$, 
        and~Mikhail~Svinin$^{4}$

\thanks{This research was supported, in part, by the Japan Science and Technology Agency, the JST Strategic International Collaborative Research Program, Project No. 18065977, and by the Russian Foundation for Basic Research, Project No. 19-58-70002.}

\thanks{$^{1}$Y. Bai is with the Graduated School of Information Science and Technology,
Osaka University 1-5 Yamada, Suita, Osaka 565-0871, Japan (email: y-bai@ist.osaka-u.ac.jp).}
        
\thanks{$^{2}$Y. Wang is with the Department of Mechanical Engineering, University of Wisconsin-Madison, Madison, WI, USA (e-mail: ywang2835@wisc.edu).}

\thanks{$^{3}$X. Xiong is with the school of mechanical engineering and automation, Harbin Institute of Technology, Shenzhen, China\protect\\
(e-mail: xiongxg@hit.edu.cn).}

\thanks{$^{4}$M. Svinin are with the Information Science and Engineering Department,
    Ritsumeikan University,
    1-1-1 Noji-higashi, Kusatsu,
    Shiga 525-8577, Japan
        (e-mail: svinin@fc.ritsumei.ac.jp).}
}

\maketitle

\begin{abstract}
  This paper presents a safe adaptive coverage controller for multi-agent systems with actuator faults and time-varying uncertainties. The centroidal Voronoi tessellation (CVT) is applied to generate an optimal configuration of multi-agent systems for covering an area of interest. As conventional CVT based controller cannot prevent collisions between agents with non-zero size, a control barrier function (CBF) based controller is developed to ensure collision avoidance with a function approximation technique (FAT) based design to deal with system uncertainties. The proposed controller is verified under simulations.
\end{abstract}

\begin{IEEEkeywords}
Adaptive control, multi-agent coverage control, control barrier function
\end{IEEEkeywords}

\section{Introduction}
The research on multi-agent coverage problems has received a considerable attention in the recent decades\cite{Cortes_04,Lee_15,Miah_17b,Chevet_19,Alessia_20,Li_20}.
The controllers proposed in\cite{Cortes_04,Lee_15,Miah_17b} can react to dynamic environments with time-varying density functions,
and the research conducted in\cite{Chevet_19,Alessia_20,Li_20} dealt with systems with uncertainties.

These algorithms are based on CVT, representing agents by points, that stay in their corresponding cells to avoid collisions with each other. When the algorithms are implemented in real robots however, collisions may still occur unless the radius of the robots is zero, which is impractical. To tackle this problem, several works based on the buffered Voronoi cell (BVC) have been proposed in\cite{Zhou_17,Wang_18_BVC,Pierson_20}. The BVC retracts the edge of each Voronoi cell by a safety radius, such that collisions are avoided if agents stay in the cells\cite{Zhou_17}. Nevertheless, this redesign of the Voronoi tessellation included path planning and tracking in each step. The collision avoidance capability of this process has not been tested in the presence of system uncertainties, which may drive agents out of the BVC and cause collisions.

Different from our previous work\cite{Bai_22a}, in this paper, we present a solution to the collision avoidance problem in the coverage control based on the CBF\cite{ames2016control}, which does not require a redesign of the Voronoi tessellation. Instead, safety range for each agent is constructed such that the collision avoidance between agents can be guaranteed. Moreover, we consider actuator faults and time-varying disturbances for the agents in the controller design, which could seriously affect the system performance in a safety critical problem.

Conventional CBF approaches require accurate model information and are commonly inapplicable to control systems with uncertainties. Robust CBF methods\cite{garg2021robust,nguyen2021robust,verginis2021safety,buch2021robust} were developed to attenuate the influence of uncertainties whereas they rely on assumptions for boundness or Lipschitzness of the uncertainty.
Apart from robust approaches, several adaptive CBF methods have been proposed for systems with parametric uncertainties \cite{taylor2020adaptive,lopez2020robust,isaly2021adaptive,cohen2022high}. The control scheme in \cite{taylor2020adaptive} guaranteed safety using an online parameter adaption, and the strategy in \cite{lopez2020robust} allowed systems to operate in larger safe sets, reducing conservatism. The concurrent learning technique based controller in \cite{isaly2021adaptive} ensured the exponential convergence of the parameter estimation, and a high-order robust adaptive CBF has been developed in \cite{cohen2022high} for systems with high-relative-degree safety constraints.
To the best of our knowledge however, none of the CBF based techniques above is applicable to our system, which includes uncertainties in both the input matrix (actuator faults) and the drift term (time-varying disturbances).

Note that although FAT can transform time-varying uncertainties to parametric ones, the control of our system is still challenging because of the inclusion of actuator faults. It may cause a controllability loss issue\cite{bechlioulis2008robust} due to that the transient response of update laws, for uncertain parameters in the input matrix, is commonly uncontrollable. Techniques that addressed this issue, such as the Nussbaum gain approach \cite{liu2017barrier}, have been proposed for stabilization problems, whereas systematic methods for CBF related problems have not been developed yet. In this paper, a novel adaptive CBF based approach is proposed that addresses the controllability loss issue with a special design of the update laws, and deals with the time-varying disturbances based on the FAT.

The rest of the paper is organized as follows. In Section~\ref{sec:review}, some preliminaries about CVT, FAT and CBF are introduced. In Section~\ref{sec:control},
a safe adaptive coverage problem is stated for multi-agent systems, and a corresponding control algorithm is developed based on the CBF technique.
The proposed control algorithm is verified under simulations in Section~\ref{sec:case},
and finally, conclusions are drawn in Section~\ref{sec:concl}.

%We restrict ourselves to looking at local minima in this
%paper. Global optimization of (1) is known to be difficult even in the centralized case. Thus, when we refer to an optimal coverage
%configuration we mean a locally optimal configuration.

\section{Preliminaries}
\label{sec:review}
In this section, the concepts about FAT, CBF, and CVT are reviewed, which are the main tools for our controller design.

\subsection{CBF}
\label{sec:cbf}
Consider the following control affine system
\begin{IEEEeqnarray}{rCl}
\dot{\bm{x}} &= & \bm{f}(\bm{x}) + \bm{g}(\bm{x}) \bm{u},\label{eqnsys}
\end{IEEEeqnarray}
where $\bm{x}\in\mathbb{R}^n$ is the state, $\bm{u}\in U\subset\mathbb{R}^m$ is the control input, $\bm{f}: \mathbb{R}^n\to\mathbb{R}^n$ and $\bm{g}:\mathbb{R}^n\to\mathbb{R}^{n\times m}$ are locally Lipchitz continuous functions.
% Consider an affine control system
% \begin{IEEEeqnarray}{rCl}
% \label{eqnsys}
% \dot{\bm{x}} = f(\bm{x}) + g(\bm{x}) u
% \end{IEEEeqnarray}
% where $\bm{x} \in \R^n$, $u \in U \subset \R^m$, $f:\R^n\rightarrow \R^n$ and $g:\R^n\rightarrow \R^m$ are locally Lipschitz continuous.
A set $\mathcal{S}$ is called forward controlled invariant with respect to system \eqref{eqnsys} if for every $\bm{x}_0 \in \mathcal{S}$, there exists a control signal $\bm{u}(t)$ such that $\bm{x}(t;t_0,\bm{x}_0) \in \mathcal{S}$ for all $t\geq t_0$, where $\bm{x}(t;t_0,\bm{x}_0)$ denotes the solution of \eqref{eqnsys} at time $t$ with initial condition $\bm{x}_0\in\mathbb{R}^n$ at time $t_0$.
%To simplify the discussion, we will use the same definition as above for the controlled invariance of  time-varying systems, which is slightly different from the definition given in  \cite{blanchini2008set}.  %In this paper, it will be assumed that $U=\R^m$ (i.e., there is no constraint on the input).
%{\color{blue}It is assumed that $f(0)=0$.}
% \begin{remark}
% In \cite{blanchini2008set}, the definition of forward controlled invariance for a time-varying system is slightly different from the definition for a time-invariant system . To simplify the discussion, we still use the same definition given above when we discuss about controlled invariance for a time-varying system.
% \begin{equation}\label{timevaryingsystem}
%     \dot{\bm{x}}=f(\bm{x},u,t),
% \end{equation}
% the definition of forward controlled invariance is slightly different \cite{blanchini2008set}. In this paper, to avoid confusion, we say that a set $\mathcal{S}$ is controlled safe in regard to the time-varying system \eqref{timevaryingsystem} if for every $\bm{x}_0 \in \mathcal{S}$, there e\bm{x}ists a control signal $u(t)$ such that $\bm{x}(t;t_0,x_0) \in \mathcal{S}$ for all $t\geq t_0$.
%\end{remark}

%\begin{definition}\label{dfn:newcbf}\cite{Xu2015ADHS}
Consider control system \eqref{eqnsys} and a set $\mathcal{C} \subset \R^n$ defined by
\begin{equation}\label{setc}
    \mathcal{C} = \{ \bm{x} \in \R^n : h(\bm{x}) \geq 0\},
\end{equation}
for a continuously differentiable function $h: \R^n \to \R$ that has a relative degree one. The function $h$ is called a (zeroing) CBF
%defined on a  set $\mathcal{D}$ with $\C\subseteq\mathcal{D}\subset \R^n$,
if  there exists a constant $\gamma>0$ such that
%\footnote{A more general definition for the (zeroing) CBFs that involves extended class $\mathcal{K}$ functions can be found in Def. 6 in \cite{Xu2015ADHS}.}
\begin{align}\label{ineqZCBF}
& \sup_{\bm{u} \in U}  \left[ \mathcal{L}_f h(\bm{x}) + \mathcal{L}_g h(\bm{x}) \bm{u} + \gamma h(\bm{x})\right] \geq 0, %\;\forall x \in \mathcal{D},
\end{align}
where $\mathcal{L}_fh(\bm{x})=\frac{\partial h}{\partial \bm{x}}\bm{f}(\bm{x})$, $\mathcal{L}_gh(\bm{x})=\frac{\partial h}{\partial \bm{x}}\bm{g}(\bm{x})$ are the Lie derivatives \cite{Xu2015ADHS}.
	%	The ZCBF $h$ is said to be locally Lipschitz continuous if $\alpha$ and the derivative of $h$ are both locally Lipschitz continuous.
%\end{definition}
%If $U=\R^m$ and $\mathcal{L}_gh(x) \ne 0$ for $x\in\mathcal{D}$, then the function $h$ is always a ZCBF.
Given a CBF $h$, the set of all control values that satisfy \eqref{ineqZCBF} for all $\bm{x}\in\R^n$ is defined as
%Given a CBF $h_0$, for all $x\in\D$, define the set of all control values that satisfy \eqref{ineqZCBF}:
%\begin{equation*}%\label{zcbfinputset}
$K_{bf}(\bm{x}) =  \{ \bm{u} \in U : \mathcal{L}_f h(\bm{x}) + \mathcal{L}_g h(\bm{x}) \bm{u} + \gamma h(\bm{x}) \geq 0\}.$
%\end{equation*}
It was proven in \cite{Xu2015ADHS} that any Lipschitz continuous controller $\bm{u}(\bm{x}) \in K_{bf}(\bm{x})$ for every $\bm{x}\in\R^n$ will guarantee the forward invariance of $\mathcal{C}$. The provably safe control law is obtained by solving an online quadratic program (QP) problem that includes the control barrier condition as its constraint.

%A $C^r$ function $h(x): \R^n \to \R$ with a relative degree $r$ where $r\geq 2$ is called a (zeroing) CBF if  there exists a column vector ${\bf a}\in\R^r$ such that $\forall x\in\R^n$,
%\begin{align}\label{ineq:ZCBF2}
%	& \sup_{u \in U}  [\mathcal{L}_g \mathcal{L}_f^{r-1}h(x)u +\mathcal{L}_f^rh(x)+{\bf a}'\eta(x) ] \geq 0
%\end{align}
%where $\eta(x)=[\mathcal{L}_f^{r-1}h, \mathcal{L}_f^{r-2}h,...,h]^{\top}\in\R^r$, and ${\bf a}=[a_1,...,a_r]^{\top}\in\R^r$ is chosen such that the roots of $p_0^r(\la)=\la^r+a_1\la^{r-1}+...+a_{r-1}\la+a_r$ are all negative reals $-\la_1,...,-\la_r<0$.
%Define functions $s_k(x(t))$ for $k=0,1,...,r$ as follows:
%\begin{align}\label{liftedh}
%s_0(x(t))&=h(x(t)),\;s_{k}(x(t))=(\frac{\diff}{\diff t}+\la_k)s_{k-1}.
%\end{align}
%It was shown in \cite{nguyen2016exponential} that any controller $u(x) \in \{ u \in U : \mathcal{L}_g L_f^{r-1}h_0(x)u +L_f^rh_0(x)+{\bf a}'\eta(x) ] \geq 0\}$ that is Lipschitz  will guarantee the forward invariance of $\mathcal{C}$. The time-varying CBF with a general relative degree and its safety guarantee for a time-varying system were discussed in \cite{xu2018constrained}.

\subsection{FAT}\label{sec:FAT}
The FAT is an effective tool to deal with control systems with time-varying nonlinear uncertainties\cite{huang2001sliding}.
For instance, if $\boldsymbol{d}(t)$ is an unknown time-varying function in a control system, one can utilize weighted basis functions to represent $\boldsymbol{d}(t)$, at each time instant, as \cite{huang2001sliding
%,Izadbakhsh_17,khorashadizadeh_17
,Zirkohi_18,Schwager_09,Schwager_17
%,Bai_17,Bai_20b
,Bai_20a,Wang_21,Bai_22a}
\begin{IEEEeqnarray}{rCl}\label{d_approx}
  \boldsymbol{d}(t)=\sum^{\infty}_{j=1}\boldsymbol{d}_{j}\psi_{j}(t),
\end{IEEEeqnarray}
where $\boldsymbol{d}_{j}$ denotes an unknown constant vector (weight) and $\psi_{j}(t)$ is the basis function to be selected. An update law $\hat{\boldsymbol{d}}_{j}(t)$ is designed to approximate $\boldsymbol{d}_{j}$ so as to reject the effect of $\boldsymbol{d}(t)$ to the control system.

Several candidates for the basis function $\psi_{j}(t)$ in \eqref{d_approx} can be chosen to approximate the nonlinear functions, and in this paper, we select the basis function $\psi_{j}(t)$ as the Fourier series\cite{huang2001sliding}
\begin{equation}
\psi_{j}(t)=\begin{cases}
1, & j=1, \\
\cos \omega_{l} t, & j=2l, \\
\sin \omega_{l} t, & j=2l+1,
\end{cases}
\end{equation}
where $\omega_{l}=2\pi l/T$ are the frequencies, $l \in \{1,2,...,(N-1)/2\}$ ($N$ is an odd number), and $T$ is the total time interval.

\subsection{CVT}
Let $\mathcal{\boldsymbol{D}}\subset \mathbb{R}^2 $ be a 2-D domain to be covered by $n$ agents.
Let $\boldsymbol{p}_{i}\in \mathcal{\boldsymbol{D}}, i = 1,...,n$ be the position of the $i$th agent and $\boldsymbol{p}=\{\boldsymbol{p}_{i}\}$ where $\{.\}$ denotes a collection of functions. The coverage problem is concerned with placing the agents in $\mathcal{\boldsymbol{D}}$, dividing $\mathcal{\boldsymbol{D}}$ into regions of dominance of each agent $i$. A Voronoi tessellation is thus formed as
\begin{equation}\label{voronoi}
V_{i}\left ( \boldsymbol{p} \right )= \left \{ \boldsymbol{q}\in \mathcal{\boldsymbol{D}}\mid \left \| \boldsymbol{q}-\boldsymbol{p}_{i}  \right \|\leq \left \| \boldsymbol{q}-\boldsymbol{p}_{j} \right \| ,i\neq j\right \},
\end{equation}
where $\|.\|$ represents the $l_{2}$-norm for a vector,
such that agent $i$ is in charge of each subregion $V_{i}$.

%
%The coverage problem is concerned with placing $n$ agents in $\mathcal{\boldsymbol{D}}$. Let $\boldsymbol{p}_{i}\in \mathcal{\boldsymbol{D}}, i = 1,...,n$ be the position of the $i$th robot and the domain itself can be divided into regions of dominance, forming a proper partition of $\mathcal{\boldsymbol{D}}$. This partition of $\mathcal{\boldsymbol{D}}$ is a Voronoi tessellation, denoted by
%\begin{equation}\label{voronoi}
%V_{i}\left ( \boldsymbol{p} \right )= \left \{ \boldsymbol{q}\in \mathcal{\boldsymbol{D}}\mid \left \| \boldsymbol{q}-\boldsymbol{p}_{i}  \right \|\leq \left \| \boldsymbol{q}-\boldsymbol{p}_{j} \right \| ,i\neq j\right \},
%\end{equation}
%where the idea is to let robot $i$ be in charge of each covering region $V_{i}$.

To measure the performance of the coverage for the multi-agent system (how well a given point $\boldsymbol{q}\in \mathcal{\boldsymbol{D}}$ is covered by agent $i$ at position $\boldsymbol{p}_{i}\in \mathcal{\boldsymbol{D}}$),
one can define the locational cost
\begin{equation}\label{H0}
\mathcal{H}\left (\boldsymbol{p},t \right)= \sum_{i=1}^{n}\int_{V_{i}}\left \| \boldsymbol{q}-\boldsymbol{p}_{i} \right \|^{2}\phi \left ( \boldsymbol{q} \right )\textrm{d}\boldsymbol{q},
\end{equation}
where $\phi(\boldsymbol{q})$ denotes the associated density function, which is assumed to be positive and bounded.
It captures the relative importance of a point $\boldsymbol{q}\in \mathcal{\boldsymbol{D}}$\cite{Lee_15}.

From \eqref{H0}, the time derivative of $\mathcal{H}$ can be expressed by
\begin{equation}\label{dHdt}
  \dot{\mathcal{H}}=\sum_{i=1}^{n}\frac{\partial \mathcal{H}}{\partial \boldsymbol{p}_{i}}\dot{\boldsymbol{p}}_{i}.
\end{equation}
Note that as the density $\phi$ in time-invariant, $\frac{\partial \mathcal{H}}{\partial t}=0$. In\cite{Lee_15}, it was shown that
\begin{equation} \label{E5}
\frac{\partial \mathcal{H}}{\partial \boldsymbol{p}_{i}}= 2m_{i} ( \boldsymbol{p}_{i}-\boldsymbol{c}_{i}  )^{\top},
\end{equation}
where the mass $m_{i}$ and the center of mass $\boldsymbol{c}_{i}$ of the $i$-th Voronoi cell $V_{i}$ are defined as
$
m_{i}= \int_{V_{i}}\phi \left ( \boldsymbol{q} \right )\textrm{d}\boldsymbol{q},\
\boldsymbol{c}_{i}= \frac{\int_{V_{i}}\phi \left ( \boldsymbol{q} \right )\boldsymbol{q} \textrm{d}\boldsymbol{q}}{m_{i}}
$.
Note that $m_{i}>0$ because the density function $\phi(\boldsymbol{q})$ is strictly positive\cite{Schwager_09}.

At a given time $t$, an optimal coverage for the domain $\mathcal{\boldsymbol{D}}$ requires a configuration of agents $\boldsymbol{p}$ to minimize $\mathcal{H}$.
From \eqref{E5}, one can see that a critical point is
\begin{equation} \label{E6}
\boldsymbol{p}_{i}\left ( t \right )=\boldsymbol{c}_{i}\left ( \boldsymbol{p},t \right ),\: \:  i= 1,...,n.
\end{equation}
When (\ref{E6}) is satisfied, an agents' network is said to be in a locally optimal coverage configuration\cite{Schwager_09}. The corresponding $\boldsymbol{p}$ defines the CVT.

\section{Controller design}\label{sec:control}
In this section, a coverage problem is first defined. Then, a safe adaptive controller based on the FAT and CBF techniques is proposed for a multi-agent system which guarantees the collision avoidance in the presence of system uncertainties.

\subsection{Control problem formalization}
The following single integrator model is adopted for the agents in the control strategy design
\begin{equation}\label{model_f}
 \dot{\boldsymbol{p}}_{i} = \overline{\boldsymbol{u}}_{i}+\boldsymbol{d}_{i}(t),
%\boldsymbol{u}_{i}^{F}+\boldsymbol{d}_{i}
%=\theta_{i}\boldsymbol{u}_{i}+\boldsymbol{d}_{i},
  \
  i \in \{1,2,...,n\},
\end{equation}
where $n$ is the number of agents, $\boldsymbol{p}_{i}\in\mathbb{R}^{2}$ represents the state of an agent, $\overline{\boldsymbol{u}}_{i}\in\mathbb{R}^{2}$ denotes the input signal, and $\boldsymbol{d}_{i}\in\mathbb{R}^{2}$ is a time-varying uncertainty which is assumed to be continuous and bounded.

Note that the expected system performance and stability can be severely compromised when the actuators do not operate as desired (actuator fault)\cite{Ye_06,Liu_17}. In order to improve the performance and reliability of systems, control methods need to be designed to compensate for the effects of actuator fault, which leads to a fault-tolerant control problem. To study this problem, the following actuator fault model is introduced (when the partial loss of actuator effectiveness is considered)\cite{Ye_06,Liu_17}
\begin{equation}\label{tolerant}
  \overline{\boldsymbol{u}}_{i}=\theta_{i}\boldsymbol{u}_{i},
\end{equation}
and \eqref{model_f} can be rearranged as
\begin{equation}\label{de}
    \dot{\boldsymbol{p}}_{i} =\theta_{i}\boldsymbol{u}_{i}+\boldsymbol{d}_{i}.
\end{equation}
\begin{assumption}\label{assumption1}
The actuator fault coefficient $\theta_{i}$ satisfies $\theta_{i} \in [\alpha_{i}, 1]$, where $\alpha_{i}$ is a small positive constant.
The motivation of imposing this assumption is to guarantee the controllability of system \eqref{de}. If $\theta_{i}=0$, the system is uncontrollable, and the controller design procedure becomes invalid.
\end{assumption}

In the conventional design of CVT based coverage control algorithms, agents (robots) are represented by points that stay in their corresponding cells to avoid collisions with each other. However, when the algorithms are implemented in real robots, collisions may still occur unless the radius of the robots is zero. To deal with this safety issue, in this paper, we design a safe adaptive controller for system \eqref{de} based on the CBF technique. According to Section \ref{sec:cbf}, one defines a safe set $\C_{i}$ for agent $i$ as
\begin{equation}\label{safety_set}
  \C_{i}=\{\bm{p}_{i}\in\R^n, h_{i}(\bm{p}_{i})\geq 0\},
\end{equation}
where $h_{i}$ is a Lipschitz continuous function with a relative degree 1. To avoid collisions, one can design $h_{i}$ as
\begin{equation}\label{}
  h_{i}=\|\bm{p}_{i}-\bm{z}_{i}\|^{2}-(2r_{\textrm{safe}})^{2},
\end{equation}
%$h_{i}=\|\bm{p}_{i}-\bm{z}_{i}\|^{2}-(2r_{\textrm{safe}})^{2}$
where $\bm{z}_{i}$ denotes the position of the nearest detected neighbor in the sensing range of agent $i$, and the velocity of the neighbor $\dot{\bm{z}}_{i}$ is measurable by agent $i$, and $r_{\textrm{safe}}$ is the safety radius which the size of agent $i$ can fit into.

Assuming that there exists a nominal control input $\hat{\bm{u}}_{i}$ for $n$ agents to achieve a CVT within $\mathcal{\boldsymbol{D}}$ ($\lim_{t \to \infty}\boldsymbol{p}_{i}-\boldsymbol{c}_{i} = \bm{0}$) regardless to the safety concerns, the control problem can be stated as follows. Construct the actual input $\boldsymbol{u}_{i}$ such that the corresponding $\bm{p}_{i}(t)$ resulted from system \eqref{de} will stay inside the safety set $\C_{i}$ defined as \eqref{safety_set}, i.e., $h(\bm{p}_{i}(t))\geq 0$ for $\forall t>0$, in the presence of a parametric uncertainty $\theta_{i}$ and a time-varying uncertainty $\boldsymbol{d}_{i}$.

To the best of our knowledge, the CBF based safe adaptive controller design for systems with both the time-varying uncertainties and the actuator faults, has not been proposed yet in the literature, which constitutes the novelty of this work.

\subsection{A safe adaptive coverage controller design}
To compensate for the effects of time-varying uncertainty $\boldsymbol{d}_{i}$ in system \eqref{de}, following the FAT approach (see Section \ref{sec:FAT}), the approximation of \eqref{de} can be represented as
\begin{equation}\label{syseqn}
  \dot{\boldsymbol{p}}_{i} = \theta_{i}\boldsymbol{u}_{i}+\sum^{N}_{j=1}\boldsymbol{d}_{ij}\psi_{j}(t)+\boldsymbol{\epsilon}_{i},
\end{equation}
where $\boldsymbol{d}_{ij}$ denotes an unknown constant vector (weight), $\psi_{j}(t)$ is the basis function to be selected, and $\boldsymbol{\epsilon}_{i}$ describes the deviation between the uncertainty $\boldsymbol{d}_{i}$ and the weighted basis functions.
%From the Weierstrass theorem, when $N \to \infty$, the approximation error $\boldsymbol{\epsilon}_{i}$ would be infinitesimally small. Thus, it is reasonable to have the following
%\cite{huang2001sliding,Zirkohi_18}
\begin{assumption}
The unknown parameter $\bm{d}_{ij}$, error $\boldsymbol{\epsilon}_{i}$, and velocity $\dot{\bm{z}}_{i}$ are bounded such that $\|\bm{d}_{ij}\|\leq \bar{d}_{ij}$, $\|\bm{\epsilon}_{i}\| \leq E$, $\|\dot{\bm{z}}_{i}\| \leq V_{z}$, where $\bar{d}_{ij}$, $E$, and $V_{z}$ are positive constants.
\end{assumption}
Regarding the actuator faults, setting $\theta_{i}=\theta_{i}^*+\frac{1+\alpha_{i}}{2}$, we can see that $\theta_{i}^*\in[\frac{\alpha_{i}-1}{2}, \frac{1-\alpha_{i}}{2}]$.
By selecting $\bar{\theta}_{i}=\frac{1-\alpha_{i}}{2}$, we have $\theta_{i}^*\in[-\bar{\theta}_{i}, \bar{\theta}_{i}]$. Then, system \eqref{syseqn} becomes
\begin{equation}
        \dot{\bm{p}}_{i} =\theta_{i}^* \bm{u}_{i}+\frac{1+\alpha_{i}}{2} \bm{u}_{i}+\sum^{N}_{j=1}\boldsymbol{d}_{ij}\psi_{j}(t)+\boldsymbol{\epsilon}_{i}.
        \label{syseqn1}
\end{equation}
To compensate for the effects of the unknown parameters $\theta_{i}^*$ and $\bm{d}_{ij}$ to the control system \eqref{syseqn1}, corresponding update laws $\hat{\theta}_{i}$ and $\hat{\bm{d}}_{ij}$ need to be designed for approximating these parameters. A projection operator is adopted in the consequent design of update laws, which is defined as\cite{lavretsky2011projection}
\begin{IEEEeqnarray}{rCl}
&&\proj(\bm{x},\bm{y},l) \nonumber\\
    &&=\begin{cases}
     \bm{y}-l(\bm{x})\frac{\nabla l(\bm{x})\nabla l(\bm{x})^\top}{\|\nabla l(\bm{x})\|^2} \bm{y}, & \text{if} \ l(\bm{x})>0\ \!\wedge\ \! \bm{y}^\top \nabla l(\bm{x})>0, \\
     \bm{y}, & \text{otherwise}.
    \end{cases} \nonumber
\end{IEEEeqnarray}
Here $\bm{x}$, $\bm{y}$ are arbitrary vectors and $l(\bm{x})$ is convex function defined as
\begin{equation}
    l(\bm{x})=\frac{\bm{x}^\top \bm{x}-\bar{x}^2}{2\eta \bar{x}+\eta^2},
\end{equation}
where $\bar{x}$ and $\eta$ are constants.
\begin{lemma}\label{lemma1}
Given $\bm{x}^*$ as the nominal value for $\bm{x}$ which may vary from $\bm{x}$,
if $l(\bm{x}^*)\leq 0$, one can see that\cite{lavretsky2011projection}
\begin{equation}
    (\bm{x}-\bm{x}^*)^{\top}(\proj(\bm{x},\bm{y},l)-\bm{y})\leq 0.\label{proj}
\end{equation}
\end{lemma}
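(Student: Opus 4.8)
The plan is to prove the inequality by a case analysis on the two branches of the projection operator, using the convexity of $l$ as the key structural property. First I would observe that in the ``otherwise'' branch---that is, whenever $l(\bm{x})\leq 0$ or $\bm{y}^\top\nabla l(\bm{x})\leq 0$---the operator returns $\proj(\bm{x},\bm{y},l)=\bm{y}$, so the difference $\proj(\bm{x},\bm{y},l)-\bm{y}$ vanishes identically and \eqref{proj} holds with equality. Hence the entire content of the lemma is concentrated in the active branch, where $l(\bm{x})>0$ and $\bm{y}^\top\nabla l(\bm{x})>0$ hold simultaneously.

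Next I would substitute the active-branch formula to obtain
\[
(\bm{x}-\bm{x}^*)^\top\big(\proj(\bm{x},\bm{y},l)-\bm{y}\big)= -\,l(\bm{x})\,\frac{\big[(\bm{x}-\bm{x}^*)^\top\nabla l(\bm{x})\big]\big[\nabla l(\bm{x})^\top\bm{y}\big]}{\|\nabla l(\bm{x})\|^2},
\]
where I have used that the rank-one product $\nabla l(\bm{x})\nabla l(\bm{x})^\top\bm{y}$ factors into the vector $\nabla l(\bm{x})$ times the scalar $\nabla l(\bm{x})^\top\bm{y}$. In this expression three factors carry a definite sign: $l(\bm{x})>0$ by the branch condition, $\nabla l(\bm{x})^\top\bm{y}=\bm{y}^\top\nabla l(\bm{x})>0$ again by the branch condition, and $\|\nabla l(\bm{x})\|^2>0$. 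Consequently the sign of the whole quantity is governed entirely by the remaining scalar $(\bm{x}-\bm{x}^*)^\top\nabla l(\bm{x})$, and it suffices to show that this term is nonnegative.

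This last step is where the convexity of $l$ and the hypothesis $l(\bm{x}^*)\leq 0$ come together, and it is the crux of the argument. By the first-order characterization of a convex function, $l(\bm{x}^*)\geq l(\bm{x})+\nabla l(\bm{x})^\top(\bm{x}^*-\bm{x})$, which rearranges to $(\bm{x}-\bm{x}^*)^\top\nabla l(\bm{x})\geq l(\bm{x})-l(\bm{x}^*)$. Since on the active branch $l(\bm{x})>0$ while $l(\bm{x}^*)\leq 0$ by assumption, the right-hand side is strictly positive, so $(\bm{x}-\bm{x}^*)^\top\nabla l(\bm{x})>0$. Substituting this back into the displayed identity makes the sign of every factor explicit and yields a nonpositive value, which establishes \eqref{proj}. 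I expect the main obstacle to be essentially bookkeeping rather than conceptual: one must track carefully which terms are scalars versus the rank-one matrix $\nabla l(\bm{x})\nabla l(\bm{x})^\top$, so that the sign-counting in the active branch is unambiguous. The convexity inequality itself is standard, but I would also note in passing that the specific $l(\bm{x})=(\bm{x}^\top\bm{x}-\bar{x}^2)/(2\eta\bar{x}+\eta^2)$ is genuinely convex, being a positive multiple of $\|\bm{x}\|^2$ up to an additive constant whenever $2\eta\bar{x}+\eta^2>0$.
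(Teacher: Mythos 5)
Your proof is correct, and it is essentially the standard argument: the paper itself gives no proof of Lemma~\ref{lemma1} beyond citing \cite{lavretsky2011projection}, and your case split on the two branches of $\proj$, combined with the first-order convexity inequality $(\bm{x}-\bm{x}^*)^{\top}\nabla l(\bm{x})\geq l(\bm{x})-l(\bm{x}^*)>0$ on the active branch, is exactly how that reference establishes the result.
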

\begin{lemma}\label{lemma2}
If $\bm{x}(t)$ is governed by
\begin{equation}
    \dot{\bm{x}}=\proj(\bm{x},\bm{y},l)\label{projector},
\end{equation}
and $l(\bm{x}(0))\leq 1$, one can get
$l(\bm{x}(t))\leq 1$ for $\forall t\geq 0$, which implies $\|\bm{x}(t)\|\leq \bar{x} + \eta$\cite{lavretsky2011projection}.
\end{lemma}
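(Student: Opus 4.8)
The plan is to show the sublevel set $\{\bm{x}:l(\bm{x})\le 1\}$ is forward invariant under the flow \eqref{projector}, and then read off the norm bound from the explicit form of $l$. First I would record the elementary identity (using that the denominator $2\eta\bar{x}+\eta^2$ is positive)
\begin{equation}
l(\bm{x})\le 1 \iff \bm{x}^\top\bm{x}\le \bar{x}^2+2\eta\bar{x}+\eta^2=(\bar{x}+\eta)^2 \iff \|\bm{x}\|\le \bar{x}+\eta, \nonumber
\end{equation}
so that once the invariance $l(\bm{x}(t))\le 1$ is established, the claimed bound $\|\bm{x}(t)\|\le\bar{x}+\eta$ is immediate. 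I would also note $\nabla l(\bm{x})=\frac{2\bm{x}}{2\eta\bar{x}+\eta^2}$, which is nonzero whenever $l(\bm{x})>0$ (since then $\bm{x}^\top\bm{x}>\bar{x}^2>0$), so every expression in the active branch of $\proj$ is well defined on the region of interest.

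The core computation is the time derivative of $l$ along a solution, $\frac{d}{dt}l(\bm{x}(t))=\nabla l(\bm{x})^\top\dot{\bm{x}}=\nabla l(\bm{x})^\top\proj(\bm{x},\bm{y},l)$, handled through the two cases of the definition of $\proj$. In the inactive branch one simply gets $\frac{d}{dt}l=\bm{y}^\top\nabla l(\bm{x})$. In the active branch, where $l(\bm{x})>0$ and $\bm{y}^\top\nabla l(\bm{x})>0$, substituting the projected field and using the identity $\nabla l^\top\tfrac{\nabla l\,\nabla l^\top}{\|\nabla l\|^2}\bm{y}=\nabla l^\top\bm{y}$ produces the cancellation
\begin{equation}
\frac{d}{dt}l=\nabla l^\top\bm{y}-l(\bm{x})\,\nabla l^\top\bm{y}=(1-l(\bm{x}))\,\bm{y}^\top\nabla l(\bm{x}). \nonumber
\end{equation}

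Next I would check the sign of $\frac{d}{dt}l$ on the region $\{l(\bm{x})\ge 1\}$. There $l(\bm{x})>0$, so only the two branches above are possible: in the active branch $1-l(\bm{x})\le 0$ while $\bm{y}^\top\nabla l(\bm{x})>0$, giving $\frac{d}{dt}l\le 0$; in the inactive branch the defining condition forces $\bm{y}^\top\nabla l(\bm{x})\le 0$, again giving $\frac{d}{dt}l\le 0$. Hence $\frac{d}{dt}l(\bm{x}(t))\le 0$ whenever $l(\bm{x}(t))\ge 1$. Invariance then follows by a last-exit-time argument: if $l(\bm{x}(t_1))>1$ for some $t_1$, continuity of $t\mapsto l(\bm{x}(t))$ together with $l(\bm{x}(0))\le 1$ yields a largest $t_0\in[0,t_1)$ with $l(\bm{x}(t_0))=1$ and $l(\bm{x}(t))>1$ on $(t_0,t_1]$; integrating $\frac{d}{dt}l\le 0$ over $(t_0,t_1)$ gives $l(\bm{x}(t_1))\le l(\bm{x}(t_0))=1$, a contradiction.

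I expect the main obstacle to be regularity rather than the sign analysis. The operator $\proj(\bm{x},\bm{y},l)$ is only piecewise defined and is not continuously differentiable across the switching surfaces $\{l=0\}$ and $\{\bm{y}^\top\nabla l=0\}$, so I would first invoke its (local) Lipschitz continuity \cite{lavretsky2011projection} to guarantee that \eqref{projector} admits absolutely continuous solutions along which $l$ is differentiable almost everywhere. The sign argument above is deliberately branch-uniform, in that it does not require knowing which branch is active, so it survives these discontinuities; moreover the last-exit argument only ever operates on $\{l\ge 1\}\subset\{l>0\}$, where $\nabla l$ does not vanish and the active-branch formula is valid.
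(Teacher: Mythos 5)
Your proof is correct, but the comparison here is asymmetric: the paper does not prove Lemma~2 at all --- it states the invariance property and the bound $\|\bm{x}(t)\|\leq \bar{x}+\eta$ as known facts, citing Lavretsky and Gibson. What you have written is essentially a self-contained reconstruction of the argument in that cited reference: the branch-wise derivative computation giving $\frac{d}{dt}l=(1-l)\,\bm{y}^\top\nabla l$ in the active branch and $\bm{y}^\top\nabla l$ in the inactive branch, the observation that both expressions are nonpositive on $\{l\geq 1\}$ (in the inactive branch because $l>0$ forces $\bm{y}^\top\nabla l\leq 0$ there), and the resulting forward invariance of the sublevel set $\{l\leq 1\}$ via a last-exit-time argument. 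Two details you supplied are genuinely useful additions beyond what the paper records: first, the explicit equivalence $l(\bm{x})\leq 1 \iff \|\bm{x}\|\leq \bar{x}+\eta$, which is exactly the step the paper uses implicitly in the proof of Theorem~1 when it passes from $l_{\theta i}(\hat{\theta}_i)\leq 1$ to $\|\hat{\theta}_i\|\leq \bar{\theta}_i+\alpha_i/2$; second, the regularity discussion --- it is worth noting that the projection field is in fact continuous across both switching surfaces (on $\{l=0\}$ and on $\{\bm{y}^\top\nabla l=0\}$ the active branch reduces to $\bm{y}$), and locally Lipschitz by the cited reference, so your almost-everywhere differentiation and integration step is legitimate. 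In short: the paper buys brevity by outsourcing the lemma to a citation; your version makes the safety argument of Theorem~1 self-contained, at the cost of the paragraph of bookkeeping you correctly identified as the main technical overhead.
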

\begin{theorem}\label{theorem1}
By constructing the update laws $\hat{\theta}_{i}$ and $\hat{\bm{d}}_{ij}$ for the parameter estimation as
\begin{IEEEeqnarray}{rCl}
    \dot{\hat{\theta}}_{i}
    &=&\proj \Bigg(\hat{\theta}_{i},
    -\frac{1}{2K_{i}} \bigg(\frac{\pa h_{i}}{\pa \bm{p}_{i}}\bigg)^{\top}\bm{u}_{i}-\frac{\mu}{2} \hat{\theta}_{i}, l_{\theta i}\Bigg), \label{adaptivelaw1} \\
    \dot{\hat{\bm{d}}}_{ij}&=&\proj\Bigg(\hat{\bm{d}}_{ij},-\frac{1}{2Q_{ij}} \bigg(\frac{\pa h_{i}}{\pa \bm{p}_{i}}\bigg) \psi_{j}-\frac{\mu}{2} \hat{\bm{d}}_{ij}, l_{di}\Bigg), \label{adaptivelaw2}
\end{IEEEeqnarray}
where
\begin{IEEEeqnarray}{rCl}\label{l}
    l_{\theta i}(\hat{\theta}_{i})&=&\frac{\hat{\theta}_{i}^{2}-\bar{\theta}_{i}^2}{\bar{\theta}_{i}\alpha_{i} +(\alpha_{i}/2)^2},\
    l_{di}(\hat{\bm{d}}_{ij}) = \frac{\hat{\bm{d}}_{ij}^{\top}\hat{\bm{d}}_{ij}-\bar{d}_{ij}^2}{2\nu_{i} \bar{d}_{ij}+\nu_{i}^2},
\end{IEEEeqnarray}
$\alpha_{i}$ and $\nu_i$ are small constants, $\|\hat{\theta}_{i}(0)\|\leq \frac{1}{2}$, and
\begin{equation}
    K_{i}\leq \frac{h_{i}(\bm{p}_{i}(0))}{2(\|\hat{\theta}_i(0)\|+\bar{\theta}_{i})^2}, \
    Q_{ij}\leq \frac{h_{i}(\bm{p}_{i}(0))}{2N(\|\hat{\bm{d}}_{ij}(0)\|+\bar{d}_{ij})^2},\label{adaptiveparameters}
\end{equation}
any Lipschitz continuous controller $\bm{u}_{i}(\bm{p}_{i})\in K_{bf}(\bm{p}_{i}, \hat{\theta}_{i},\hat{\bm{d}}_{ij})$ where
\begin{IEEEeqnarray}{rCl}
&&K_{bf}(\bm{p}_{i}, \hat{\theta}_{i},\hat{\bm{d}}_{ij})\nonumber\\
&&\triangleq\bigg\{\bm{u}_{i}\in\mathbb{R}^m \, | \, \bigg(\frac{\pa h_{i}}{\pa \bm{p}_{i}}\bigg)^{\!\!\top} \!\sum_{i=1}^N \hat{\bm{d}}_{ij} \psi_{j}
+(\hat{\theta}_{i}+\frac{1+\alpha_{i}}{2})\bigg(\frac{\pa h_{i}}{\pa \bm{p}_{i}}\bigg)^{\!\!\top} \!\bm{u}_{i} \nonumber\\
&&\quad\quad
-\zeta_{i}+\frac{\mu}{2}\bigg(h_{i}-K_{i}\bar{\theta}_{i}^2-\sumjn Q_{ij} \bar{d}_{ij}^2\bigg)
\geq 0\bigg\}, \label{kcbf}
\end{IEEEeqnarray}
with $\zeta_{i}=\Big\|\frac{\pa h_{i}}{\pa \bm{p}_{i}}\Big\|E
+\Big\|\frac{\pa h_{i}}{\pa \bm{z}_{i}}\Big\|V_{z}$,
will guarantee the safety of $\C_{i}$ in regard to system \eqref{syseqn}.
\end{theorem}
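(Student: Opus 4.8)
The plan is to prove safety by augmenting $h_i$ with the parameter estimation errors and showing the augmented function obeys a CBF-type differential inequality. Introduce the errors $\tilde{\theta}_i=\hat{\theta}_i-\theta_i^*$ and $\tilde{\bm{d}}_{ij}=\hat{\bm{d}}_{ij}-\bm{d}_{ij}$ and define
\[
W_i=h_i-K_i\tilde{\theta}_i^{2}-\sumjn Q_{ij}\tilde{\bm{d}}_{ij}^{\top}\tilde{\bm{d}}_{ij}.
\]
The target is the inequality $\dot{W}_i\geq-\frac{\mu}{2}W_i$ together with $W_i(0)\geq0$; the comparison lemma then yields $W_i(t)\geq W_i(0)e^{-\mu t/2}\geq0$, so that $h_i\geq K_i\tilde{\theta}_i^{2}+\sumjn Q_{ij}\|\tilde{\bm{d}}_{ij}\|^{2}\geq0$ for all $t\geq0$, i.e. $\bm{p}_i(t)\in\C_i$, which is the claimed safety.

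First I would differentiate $W_i$ along \eqref{syseqn1}, writing $\dot{h}_i=\big(\frac{\pa h_i}{\pa\bm{p}_i}\big)^{\top}\dot{\bm{p}}_i+\big(\frac{\pa h_i}{\pa\bm{z}_i}\big)^{\top}\dot{\bm{z}}_i$ and substituting $\theta_i^*=\hat{\theta}_i-\tilde{\theta}_i$, $\bm{d}_{ij}=\hat{\bm{d}}_{ij}-\tilde{\bm{d}}_{ij}$. This splits $\dot{h}_i$ into a part depending only on the estimates $\hat{\theta}_i,\hat{\bm{d}}_{ij}$ and two error cross-terms $-\tilde{\theta}_i\big(\frac{\pa h_i}{\pa\bm{p}_i}\big)^{\top}\bm{u}_i$ and $-\sumjn\tilde{\bm{d}}_{ij}^{\top}\frac{\pa h_i}{\pa\bm{p}_i}\psi_j$. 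The decisive step is the treatment of the $-2K_i\tilde{\theta}_i\dot{\hat{\theta}}_i$ and $-2\sumjn Q_{ij}\tilde{\bm{d}}_{ij}^{\top}\dot{\hat{\bm{d}}}_{ij}$ contributions: because $\theta_i^*\in[-\bar{\theta}_i,\bar{\theta}_i]$ and $\|\bm{d}_{ij}\|\leq\bar{d}_{ij}$ give $l_{\theta i}(\theta_i^*)\leq0$ and $l_{di}(\bm{d}_{ij})\leq0$, Lemma~\ref{lemma1} lets me bound these by the unprojected arguments of \eqref{adaptivelaw1}, \eqref{adaptivelaw2} at the price of an inequality. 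Doing so produces exactly $+\tilde{\theta}_i\big(\frac{\pa h_i}{\pa\bm{p}_i}\big)^{\top}\bm{u}_i$ and $+\sumjn\tilde{\bm{d}}_{ij}^{\top}\frac{\pa h_i}{\pa\bm{p}_i}\psi_j$, which cancel the error cross-terms from $\dot{h}_i$. This cancellation, engineered by the choice of update laws, is what eliminates the unmeasurable errors from the bound and is the crux of the proof.

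Next I would lower-bound what remains. The estimate part of $\dot{h}_i$ is controlled by the defining inequality of $K_{bf}$ in \eqref{kcbf}, which contributes $-\frac{\mu}{2}\big(h_i-K_i\bar{\theta}_i^{2}-\sumjn Q_{ij}\bar{d}_{ij}^{2}\big)+\zeta_i$, while $\big(\frac{\pa h_i}{\pa\bm{p}_i}\big)^{\top}\bm{\epsilon}_i+\big(\frac{\pa h_i}{\pa\bm{z}_i}\big)^{\top}\dot{\bm{z}}_i\geq-\zeta_i$ by Assumption~2 and the definition of $\zeta_i$, so the two $\zeta_i$ terms cancel. The residual damping terms $K_i\mu\tilde{\theta}_i\hat{\theta}_i$ and $\sumjn Q_{ij}\mu\tilde{\bm{d}}_{ij}^{\top}\hat{\bm{d}}_{ij}$ are handled by completing the square: the identity $\tilde{\theta}_i\hat{\theta}_i=\fot\tilde{\theta}_i^{2}+\fot(\hat{\theta}_i^{2}-(\theta_i^*)^{2})$ with $(\theta_i^*)^{2}\leq\bar{\theta}_i^{2}$ gives $K_i\mu\tilde{\theta}_i\hat{\theta}_i\geq\frac{\mu}{2}K_i\tilde{\theta}_i^{2}-\frac{\mu}{2}K_i\bar{\theta}_i^{2}$, and analogously for $\tilde{\bm{d}}_{ij}$. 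Substituting these makes the $\bar{\theta}_i^{2}$ and $\bar{d}_{ij}^{2}$ terms cancel exactly and collapses the bound to $\dot{W}_i\geq-\frac{\mu}{2}W_i$.

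Finally I would check the initial condition: since $|\tilde{\theta}_i(0)|\leq\|\hat{\theta}_i(0)\|+\bar{\theta}_i$ and $\|\tilde{\bm{d}}_{ij}(0)\|\leq\|\hat{\bm{d}}_{ij}(0)\|+\bar{d}_{ij}$, the gains in \eqref{adaptiveparameters} force $K_i\tilde{\theta}_i(0)^{2}\leq\fot h_i(\bm{p}_i(0))$ and $\sumjn Q_{ij}\|\tilde{\bm{d}}_{ij}(0)\|^{2}\leq\fot h_i(\bm{p}_i(0))$, hence $W_i(0)\geq0$. I expect the main obstacle to be bookkeeping: arranging the numerous cross-terms so that the design-induced cancellation is transparent, and justifying the projection step, which requires the estimates to remain in the region where Lemma~\ref{lemma1} applies. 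The latter is secured by Lemma~\ref{lemma2}, since $\|\hat{\theta}_i(0)\|\leq\fot$ and $\bar{\theta}_i=\frac{1-\alpha_i}{2}$ give $l_{\theta i}(\hat{\theta}_i(0))\leq1$, keeping $\hat{\theta}_i$ (and likewise $\hat{\bm{d}}_{ij}$) bounded for all time.
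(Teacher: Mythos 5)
Your proposal is correct and follows essentially the same route as the paper: the paper's proof defines the identical augmented barrier $\bar{h}_i = h_i - K_i\tilde{\theta}_i^2 - \sum_{j=1}^N Q_{ij}\tilde{\bm{d}}_{ij}^\top\tilde{\bm{d}}_{ij}$ (with the mirror sign convention for the errors, which is immaterial), uses Lemma~\ref{lemma1} to cancel the error cross-terms against the unprojected update-law arguments, completes the square on the damping terms, and closes with the comparison lemma and the gain conditions \eqref{adaptiveparameters} exactly as you do. The only slight difference is the role of Lemma~\ref{lemma2}: the paper invokes it not to validate the projection bound (Lemma~\ref{lemma1} only needs $l_{\theta i}(\theta_i^*)\leq 0$ and $l_{di}(\bm{d}_{ij})\leq 0$, which concern the true parameters and hold by assumption), but to guarantee $\hat{\theta}_i + \frac{1+\alpha_i}{2}\geq\frac{\alpha_i}{2}>0$ for all time, so that the coefficient multiplying $\bm{u}_i$ in \eqref{kcbf} never vanishes and $K_{bf}$ remains nonempty.
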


\begin{proof}
%Note that $l(x)\leq 1$ indicates $|x|\leq \bar\theta+\frac{\alpha}{2}=\fot$.
Define $\bar{h}_{i}$ as
\begin{equation}\label{hbar}
    \bar{h}_{i}=h_{i}-K_{i}\tilde{\theta}_{i}^2-\sum_{j=1}^N Q_{ij}\tilde{\bm{d}}_{ij}^\top \tilde{\bm{d}}_{ij},
\end{equation}
where $\tilde{\theta}_{i}=\theta_{i}^{*}-\hat{\theta}_{i}$ and $\tilde{\bm{d}}_{ij}=\bm{d}_{ij}-\hat{\bm{d}}_{ij}$. To prove Theorem \ref{theorem1}, one needs to show that $\bar{h}_{i}(t)\geq0$ for $\forall t$, such that $h_{i}(t)\geq 0$ for $\forall t$ as required by \eqref{safety_set}. This property holds if $\dot{\bar{h}}_{i}$ can be expressed in form of (or larger than) $-\lambda \bar{h}_{i}$ where $\lambda>0$ with $\bar{h}_{i}(0)\geq 0$.

A reconstruction of $\dot{\bar{h}}_{i}$ to the form of $-\lambda \bar{h}_{i}$ is demonstrated as follows.
It can be seen that $\dot{\bar{h}}_{i}$ is calculated as
\begin{IEEEeqnarray}{rCl}
\dot{\bar{h}}_{i} &=& \bigg(\frac{\pa h_{i}}{\pa \bm{p}_{i}}\bigg)^\top \dot{\bm{p}}_{i}
+\bigg(\frac{\pa h_{i}}{\pa \bm{z}_{i}}\bigg)^\top \dot{\bm{z}}_{i}
- 2 K \tilde{\theta}_{i}\dot{\tilde{\theta}}_{i}- 2\sumjn Q_{ij} \tilde{\bm{d}}_{ij}^\top \dot{\tilde{\bm{d}}}_{ij}, \nonumber\\
&=& \bigg(\frac{\pa h_{i}}{\pa \bm{p}_{i}}\bigg)^\top\bigg(\theta_{i}^* \bm{u}_{i}+\frac{1+\alpha_{i}}{2} \bm{u}_{i}+\sum^{N}_{j=1}\boldsymbol{d}_{ij}\psi_{j}(t)+\boldsymbol{\epsilon}_{i}\bigg) \nonumber\\
&&+\bigg(\frac{\pa h_{i}}{\pa \bm{z}_{i}}\bigg)^\top \dot{\bm{z}}_{i}
+ 2 K \tilde{\theta}_{i}\dot{\hat{\theta}}_{i}+ 2\sumjn Q_{ij} \tilde{\bm{d}}_{ij}^\top \dot{\hat{\bm{d}}}_{ij}\nonumber\\
&\geq&
\bigg(\frac{\pa h_{i}}{\pa \bm{p}_{i}}\bigg)^\top\bigg(\theta_{i}^* \bm{u}_{i}+\frac{1+\alpha_{i}}{2} \bm{u}_{i}+\sum^{N}_{j=1}\boldsymbol{d}_{ij}\psi_{j}(t)\bigg)
-\zeta_{i} \nonumber\\
&&
+ 2 K \tilde{\theta}_{i}\dot{\hat{\theta}}_{i}+ 2\sumjn Q_{ij} \tilde{\bm{d}}_{ij}^\top \dot{\hat{\bm{d}}}_{ij}.\label{doth1}
\end{IEEEeqnarray}
As update laws $\dot{\hat{\theta}}_{i}$ and $\dot{\hat{\bm{d}}}_{ij}$ in \eqref{doth1} are defined as \eqref{adaptivelaw1} and \eqref{adaptivelaw2},
from Lemma \ref{lemma1}, one can see
\begin{IEEEeqnarray}{rCl}
\tilde{\theta}_{i}\dot{\hat{\theta}}_{i} &=& (\theta_{i}^{*}-\hat{\theta}_{i})
    \proj \Bigg(\hat{\theta}_{i},
    -\frac{1}{2K_{i}} \bigg(\frac{\pa h_{i}}{\pa \bm{p}_{i}}\bigg)^{\top}\bm{u}_{i}-\frac{\mu}{2} \hat{\theta}_{i}, l_{\theta i}\Bigg) \nonumber\\
    &\geq& -(\theta_{i}^{*}-\hat{\theta}_{i})\Bigg(\frac{1}{2K_{i}} \bigg(\frac{\pa h_{i}}{\pa \bm{p}_{i}}\bigg)^{\top} \bm{u}_{i}+\frac{\mu}{2} \hat{\theta}_{i}\Bigg), \label{projadaptive1} \\
\tilde{\bm{d}}_{ij}^\top \dot{\hat{\bm{d}}}_{ij} &=& (\bm{d}_{ij}-\hat{\bm{d}}_{ij})^{\top} \nonumber\\
    && \proj\Bigg(\hat{\bm{d}}_{ij},-\frac{1}{2Q_{ij}} \bigg(\frac{\pa h_{i}}{\pa \bm{p}_{i}}\bigg) \psi_{j}-\frac{\mu}{2} \hat{\bm{d}}_{ij},l_{di}\Bigg)\nonumber\\
    &\geq& -(\bm{d}_{ij}-\hat{\bm{d}}_{ij})^{\top}\Bigg(\frac{1}{2Q_{ij}} \bigg(\frac{\pa h_{i}}{\pa \bm{p}_{i}}\bigg) \psi_{j}+\frac{\mu}{2} \hat{\bm{d}}_{ij}\Bigg).\label{projadaptive2}
\end{IEEEeqnarray}
Substituting \eqref{projadaptive1} and \eqref{projadaptive2} into \eqref{doth1} yields
\begin{IEEEeqnarray}{rCl}
\dot{\bar{h}}_{i} &\geq& \bigg(\frac{\pa h_{i}}{\pa \bm{p}_{i}}\bigg)^\top\bigg(\theta_{i}^* \bm{u}_{i}+\frac{1+\alpha_{i}}{2} \bm{u}_{i}+\sum^{N}_{j=1}\boldsymbol{d}_{ij}\psi_{j}(t)\bigg) \nonumber\\
&&-\zeta_{i}
-\tilde{\theta}_{i}\bigg(\bigg(\frac{\pa h_{i}}{\pa \bm{p}_{i}}\bigg)^{\top}\bm{u}_{i}+\mu K_{i} \hat{\theta}_{i}\bigg)\nonumber\\
&&-\sumjn \tilde{\bm{d}}_{ij}^\top\bigg(\bigg(\frac{\pa h_{i}}{\pa \bm{p}_{i}}\bigg) \psi_{j}+\mu Q_{ij}\hat{\bm{d}}_{ij}\bigg)\nonumber\\
%\end{IEEEeqnarray}
%\begin{IEEEeqnarray}{rCl}
&\geq& \bigg(\frac{\pa h_{i}}{\pa \bm{p}_{i}}\bigg)^{\!\top} \!\hat{\theta}_{i}\bm{u}_{i}
\!+\!\frac{1+\alpha_{i}}{2}\bigg(\frac{\pa h_{i}}{\pa \bm{p}_{i}}\bigg)^{\!\top} \!\bm{u}_{i}
\!+\!\bigg(\frac{\pa h_{i}}{\pa \bm{p}_{i}}\bigg)^{\!\top} \!\sumjn \hat{\bm{d}}_{ij} \psi_{j} \nonumber\\
&&-\mu K_{i} \tilde{\theta}_{i} \hat{\theta}_{i}
-\mu\sumjn Q_{ij}\tilde{\bm{d}}_{ij}^\top\hat{\bm{d}}_{ij}
-\zeta_{i}.\label{doth2}
\end{IEEEeqnarray}
Note that
\begin{IEEEeqnarray}{rCl}
\tilde{\theta}_{i} \hat{\theta}_{i} &\leq& \frac{{\theta_{i}^{*}}^2-\tilde{\theta}_{i}^2}{2}\leq \frac{{\bar\theta}_{i}^2-\tilde{\theta}_{i}^2}{2}, \label{cs1} \\
\tilde{\bm{d}}_{ij}^\top \hat{\bm{d}}_{ij} &\leq& \frac{ \bm{d}_{ij}^\top \bm{d}_{ij}-\tilde{\bm{d}}_{ij}^\top \tilde{\bm{d}}_{ij} }{2}\leq\frac{ \bar{d}_{ij}^2-\tilde{\bm{d}}_{ij}^\top \tilde{\bm{d}}_{ij} }{2}.\label{cs2}
\end{IEEEeqnarray}
The substitution of \eqref{cs1} and \eqref{cs2} into \eqref{doth2} gives
\begin{IEEEeqnarray}{rCl}
\dot{\bar{ h}}_{i} &\geq&
\bigg(\hat{\theta}_{i}+\frac{1+\alpha_{i}}{2}\bigg)\bigg(\frac{\pa h_{i}}{\pa \bm{p}_{i}}\bigg)^\top \bm{u}_{i}
+\bigg(\frac{\pa h_{i}}{\pa \bm{p}_{i}}\bigg)^{\top} \sumjn \hat{\bm{d}}_{ij} \psi_{j}
\nonumber\\
&&-\frac{\mu}{2}\bigg( K_{i} ({\bar\theta}_{i}^2 - \tilde{\theta}_{i}^2)
-\sumjn Q_{ij} (\bar{d}_{ij}^2\!-\tilde{\bm{d}}_{ij}^\top \tilde{\bm{d}}_{ij})\bigg)
-\zeta_{i} \nonumber\\
&=&
\Gamma_{i}
+ \fot \mu\bigg( K_{i}\tilde{\theta}_{i}^2+\sumjn Q_{ij} \tilde{\bm{d}}_{ij}^\top \tilde{\bm{d}}_{ij} \bigg), \label{doth3}
\end{IEEEeqnarray}
where
\begin{IEEEeqnarray}{rCl}\label{Gamma}
  \Gamma_{i}&=&\bigg(\hat{\theta}_{i}+\frac{1+\alpha_{i}}{2}\bigg)\bigg(\frac{\pa h_{i}}{\pa \bm{p}_{i}}\bigg)^\top \bm{u}_{i}+\bigg(\frac{\pa h_{i}}{\pa \bm{p}_{i}}\bigg)^\top \sumjn \hat{\bm{d}}_{ij} \psi_{j} \nonumber\\
&&-\frac{\mu}{2}\bigg( K_{i}\bar{\theta}_{i}^2+\sumjn Q_{ij} \bar{d}_{ij}^2 \bigg)
-\zeta_{i}.
\end{IEEEeqnarray}
If $\bm{u}_{i}$ in \eqref{Gamma} is selected from \eqref{kcbf}, the following condition is satisfied
\begin{equation}\label{}
  \Gamma_{i} \geq -\frac{\mu}{2}h_{i},
\end{equation}
and thus, in virtue of \eqref{hbar}, \eqref{doth3} can be reexpressed as
\begin{equation}\label{}
  \dot{\bar{h}}_{i} \geq
-\frac{\mu}{2}\bigg(
h_{i}
-K_{i}\tilde{\theta}_{i}^2
-\sumjn Q_{ij} \tilde{\bm{d}}_{ij}^\top\tilde{\bm{d}}_{ij} \bigg)
=
-\frac{\mu}{2} \bar{h}_{i}.
\end{equation}
In addition, as $\theta_{i}^{*}$ and $\bm{d}_{ij}$ are bounded by $\bar{\theta}_{i}$ and $\bar{d}_{ij}$ respectively, $\bar{h}_{i}(0)$ satisfies
\begin{IEEEeqnarray}{rCl}
\bar{h}_{i}(0) &=& h_{i}(0) -K_{i}\Big(\theta_{i}^{*}-\hat{\theta}_{i}(0)\Big)^2 \nonumber\\
&&-\sumjn Q_{ij} \Big(\bm{d}_{ij}-\hat{\bm{d}}_{ij}(0)\Big)^\top \Big(\bm{d}_{ij}-\hat{\bm{d}}_{ij}(0)\Big)\nonumber\\
&\geq& h_{i}(0)-K_{i} \Big(\bar{\theta}_{i} +\|\hat{\theta}_{i}(0)\|\Big)^2 \nonumber\\
&& -\sumjn Q_{ij} \Big(\bar{d}_{ij} +\|\hat{\bm{d}}_{ij}(0)\|\Big)^{2}.
\end{IEEEeqnarray}
The selection of parameters $K_{i}$ and $Q_{ij}$ as \eqref{adaptiveparameters} yields $\bar{h}_{i}(0)\geq 0$. According to the comparison lemma\cite{khalil2002nonlinear}, we know $\bar{h}_{i}(t)\geq0$ for $\forall t$, such that $h_{i}(t)\geq 0$ for $\forall t$.

Note that $K_{bf}$ becomes empty if $\hat{\theta}_{i}+\frac{1+\alpha_{i}}{2}=0$.
However, the design of the update law ensures $\hat{\theta}_{i}+\frac{1+\alpha_{i}}{2}>0$ for $\forall t$. The proof is as follows. As $\|\hat{\theta}_{i}(0)\|\leq \frac{1}{2}$ (see Theorem \ref{theorem1}) and $\bar{\theta}_{i}=\frac{1-\alpha_{i}}{2}$, from \eqref{l} one has $l_{\theta i}(\hat{\theta}_{i}(0))\leq 1$. According to Lemma \ref{lemma2}, we obtain $l_{\theta i}(\hat\theta)\leq 1$, which indicates $\|\hat{\theta}_{i}\|\leq \bar{\theta}_{i}+\frac{\alpha_{i}}{2}=\frac{1-\alpha_{i}}{2}+\frac{\alpha_{i}}{2}=\fot$ for $\forall t$. Hence, $\hat{\theta}_{i}\in[-\fot,\fot]$, such that $\hat{\theta}_{i}+\frac{1+\alpha_{i}}{2} \in [\frac{\alpha_{i}}{2},1+\frac{\alpha_{i}}{2}]$ for $\forall t$.
\end{proof}

By Theorem \ref{theorem1}, a safe controller is obtained by solving the following CBF-QP problem
\begin{align}
\min_{\bm{u}_{i}^{*}} \quad & \|\bm{u}_{i}-\hat{\bm{u}}_{i}\|^2\label{cbfQP}\tag{CBF-QP}\\
\textrm{s.t.}\quad &  \bigg(\frac{\pa h_{i}}{\pa \bm{p}_{i}}\bigg)^\top \sum_{i=1}^N \hat{\bm{d}}_{ij} \psi_{j}
+\bigg(\hat{\theta}_{i}+\frac{1+\alpha_{i}}{2}\bigg)\bigg(\frac{\pa h_{i}}{\pa \bm{p}_{i}}\bigg)^\top \bm{u}_{i} \nonumber\\
&-\zeta_{i}+\frac{\mu}{2}\bigg(h_{i}-K_{i}\bar{\theta}_{i}^2-\sumjn Q_{ij} \bar{d}_{ij}^2\bigg)\geq 0,
\end{align}
where $\hat{\bm{u}}_{i}$ represents a nominal control law that may be unsafe in a coverage problem with non-zero sized agents.

\section{Case study}\label{sec:case}
For the verification of the proposed CBF based controller, a safety critical control scenario is considered in the overage problem. Specifically, each agent has a safety range which should not overlap with that of other agents.
The following density function is adopted for the region to be covered.
Given $\boldsymbol{q}\in \mathcal{\boldsymbol{D}}$ where
$\boldsymbol{q}=
  \begin{bmatrix}
    x & y
  \end{bmatrix}^\top$,
a time-invariant density function $\phi(\boldsymbol{q})$ is defined by a bivariate normal distribution
\begin{equation}\label{}
  \phi=\frac{1}{2\pi\sigma_{x}\sigma_{y}}
  \exp\Bigg(-\frac{1}{2}\bigg(\frac{(x-\mu_{x})^{2}}{\sigma_{x}^{2}}+\frac{(y-\mu_{y})^{2}}{\sigma_{y}^{2}}\bigg)\Bigg),
\end{equation}
where $\sigma_{x}=\sigma_{y}=0.3$, $\mu_{x}=\mu_{y}=1.75$. It can be visualized by Fig.~\ref{pic:density}.
%The density function can be visualized in Fig.~\ref{fig2}.
The agents are expected to move towards the portion with a higher density while covering the region $\mathcal{\boldsymbol{D}}$.

To demonstrate the effectiveness of collision avoidance for multi-agent systems in the presence of uncertainties, a comparison between results under the nominal controller (see Fig.~\ref{snapshots_cvt}, Fig.~\ref{dismin_cvt}, and Fig.~\ref{path_cvt}) and proposed CBF based controller (see Fig.~\ref{snapshots_cbf}, Fig.~\ref{dismin_cbf}, and Fig.~\ref{path_cbf}) is conducted. The nominal controller is selected as the non-switched controller designed in\cite{Bai_22a}.
The blue polygons in Fig.~\ref{snapshots_cvt} and Fig.~\ref{snapshots_cbf} represent the region $\mathcal{\boldsymbol{D}}$ to be covered. The centers of agents are denoted by the black dots, around which there are safety ranges with a radius $r_{\textrm{safe}}=0.25m$ around itself, represented by red circles.
The time histories for the minimum distance between all pairs of agents' centers and paths of agents are respectively plotted in Fig.~\ref{dismin_cvt}, Fig.~\ref{path_cvt} for nominal control, and in Fig.~\ref{dismin_cbf}, Fig.~\ref{path_cbf} for CBF control.

Both the actuator faults and the time-varying disturbances are taken into account to test the adaptiveness of the CBF based controller. The actuator fault coefficient $\theta_{i}=\frac{1}{2}$, and the disturbance $\boldsymbol{d}_{i}$ to the control system is chosen as
$
\boldsymbol{d}_{i}(t)=
  \begin{bmatrix}
    d_{x}(t) & d_{y}(t)
  \end{bmatrix}^\top
$,
where
\begin{equation}\label{}
  d_{x}(t) = d_{y}(t)=
  \begin{cases}
		\frac{{d}_{\textrm{max}}}{2}t,\indent  {0\leq{t}< \frac{T}{6}}, \\
		{{d}_{\textrm{max}}{t}},\indent  {\frac{T}{6}\leq{t}<\frac{T}{3}}, \\
		\frac{{d}_{\textrm{max}}}{2}(\frac{T}{2} - t),\indent  {\frac{T}{3}\leq{t}<\frac{2T}{3}}, \\
		{-{d}_{\textrm{max}}},\indent  {\frac{2T}{3}\leq{t}<\frac{5T}{6}}, \\
		\frac{{d}_{\textrm{max}}}{2}{(t - T)},\indent  {\frac{5T}{6}\leq{t}\leq T},
	\end{cases}
\end{equation}
and the maximum amplitude of disturbance $d_{\textrm{max}}=1$.
Other parameters in the simulation are selected as
$n=8$, $T=30s$, $l=5$, $\bar{d}_{ij}=20$, $E=0.2$, $V_{z}=10$, $\alpha_{i}=\nu_{i}=0.1$, $\mu=2$.

Fig.~\ref{snapshots_cvt} and Fig.~\ref{dismin_cvt} show that under the nominal control, the safety ranges of agents are overlapped with each other, which indicates possible collisions between agents. On the hand, Fig.~\ref{snapshots_cbf} and Fig.~\ref{dismin_cbf} demonstrate that the minimum distance between all pairs of agents is always larger than or equal to the safety distance ($2 r_{\textrm{safe}}$), even in the presence of actuator faults and time-varying disturbances.
\begin{figure}[!ht]
  \centering
  % Requires \usepackage{graphicx}
  \includegraphics[width=6.5cm]{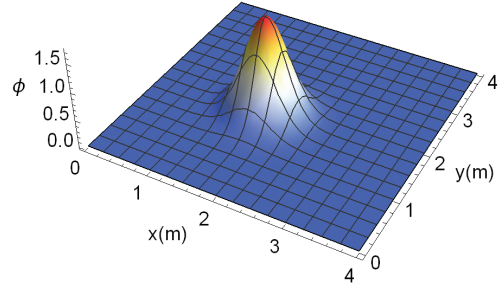}\\
  \caption{Density function.}\label{pic:density}
\end{figure}
\begin{figure*}[ht]
    \centering
    \subfigure[\label{ini_constant}]{
    \includegraphics[height=4cm]{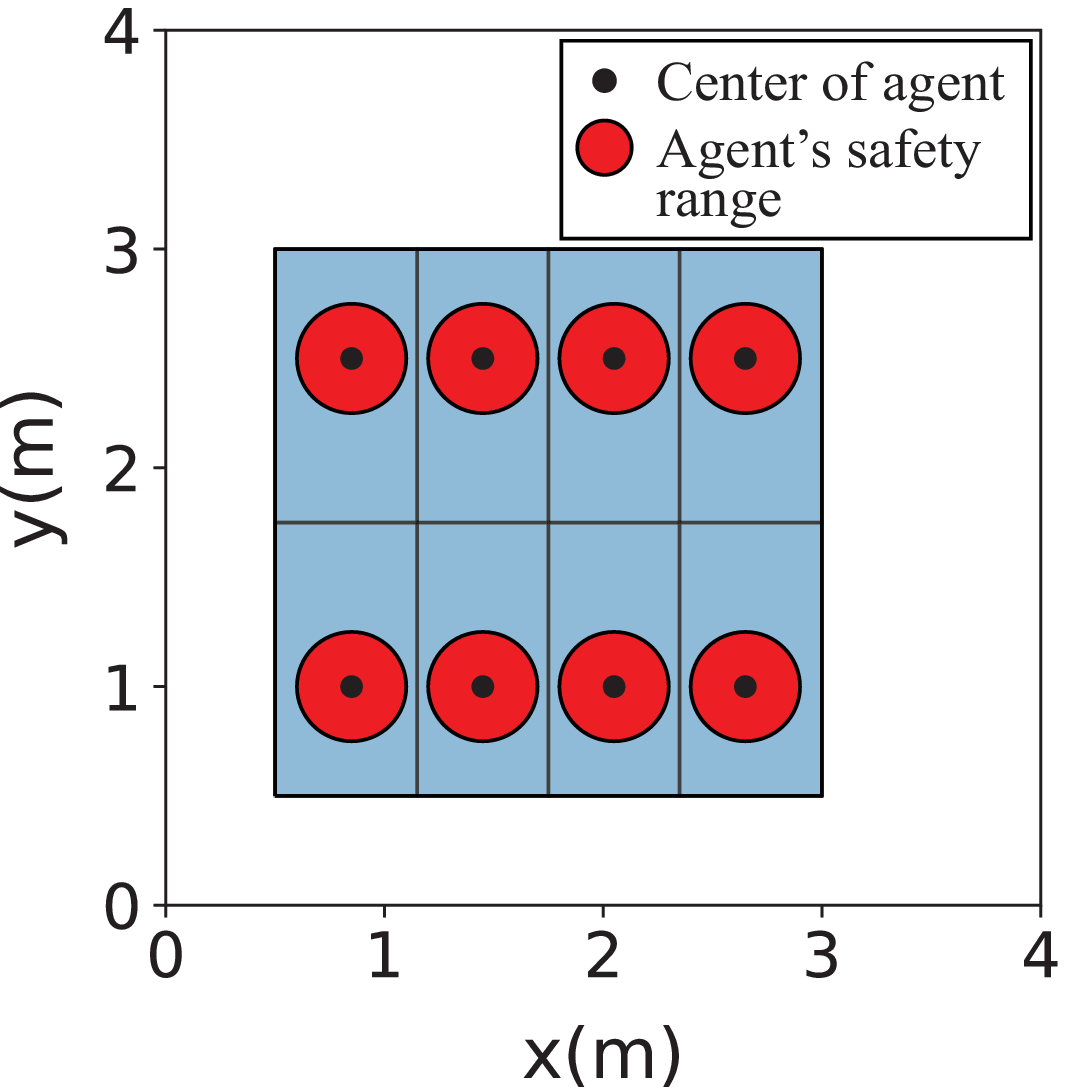}
}\hspace{0.2cm}
    \subfigure[]{
    \includegraphics[height=4cm]{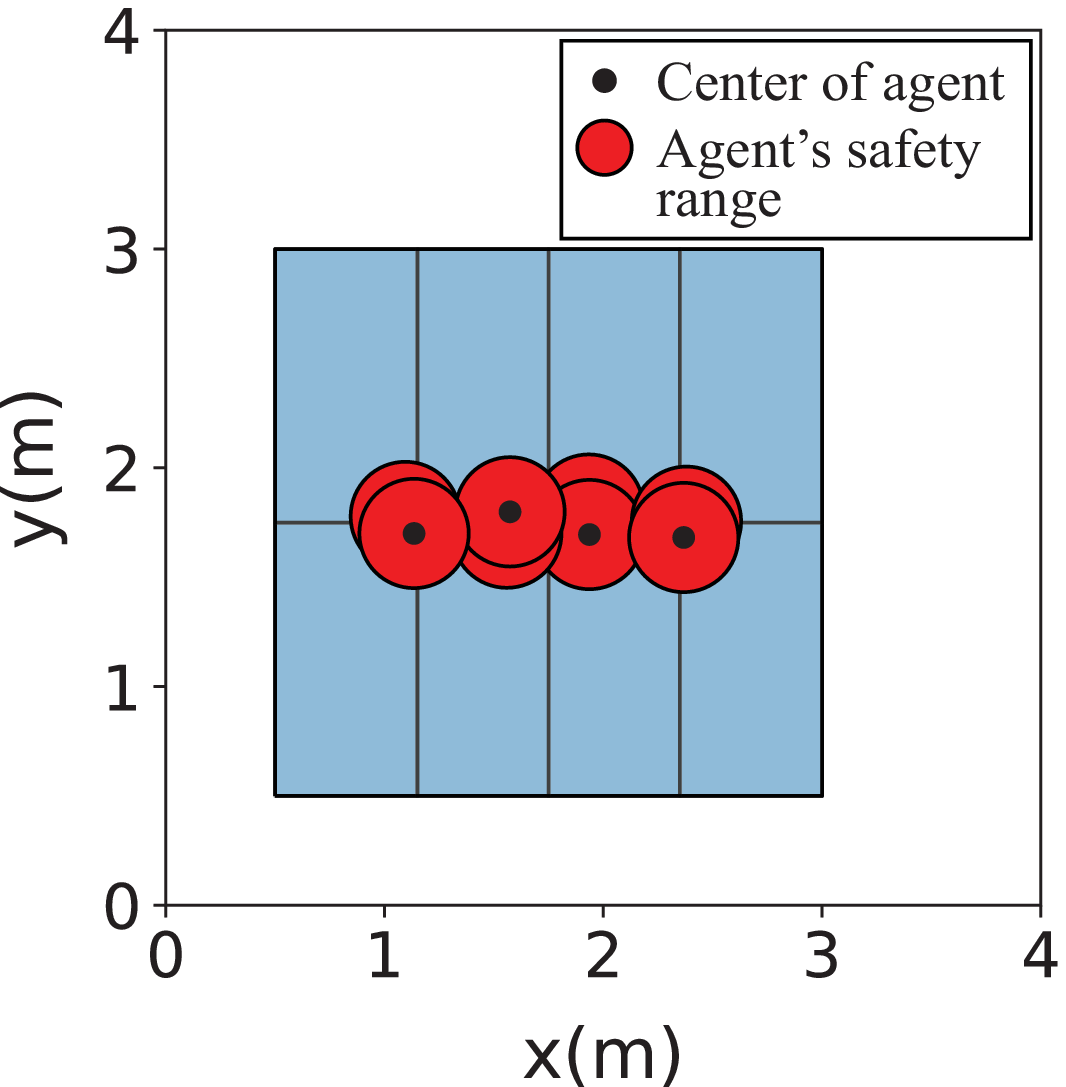}
}\hspace{0.2cm}
    \subfigure[]{
	\includegraphics[height=4cm]{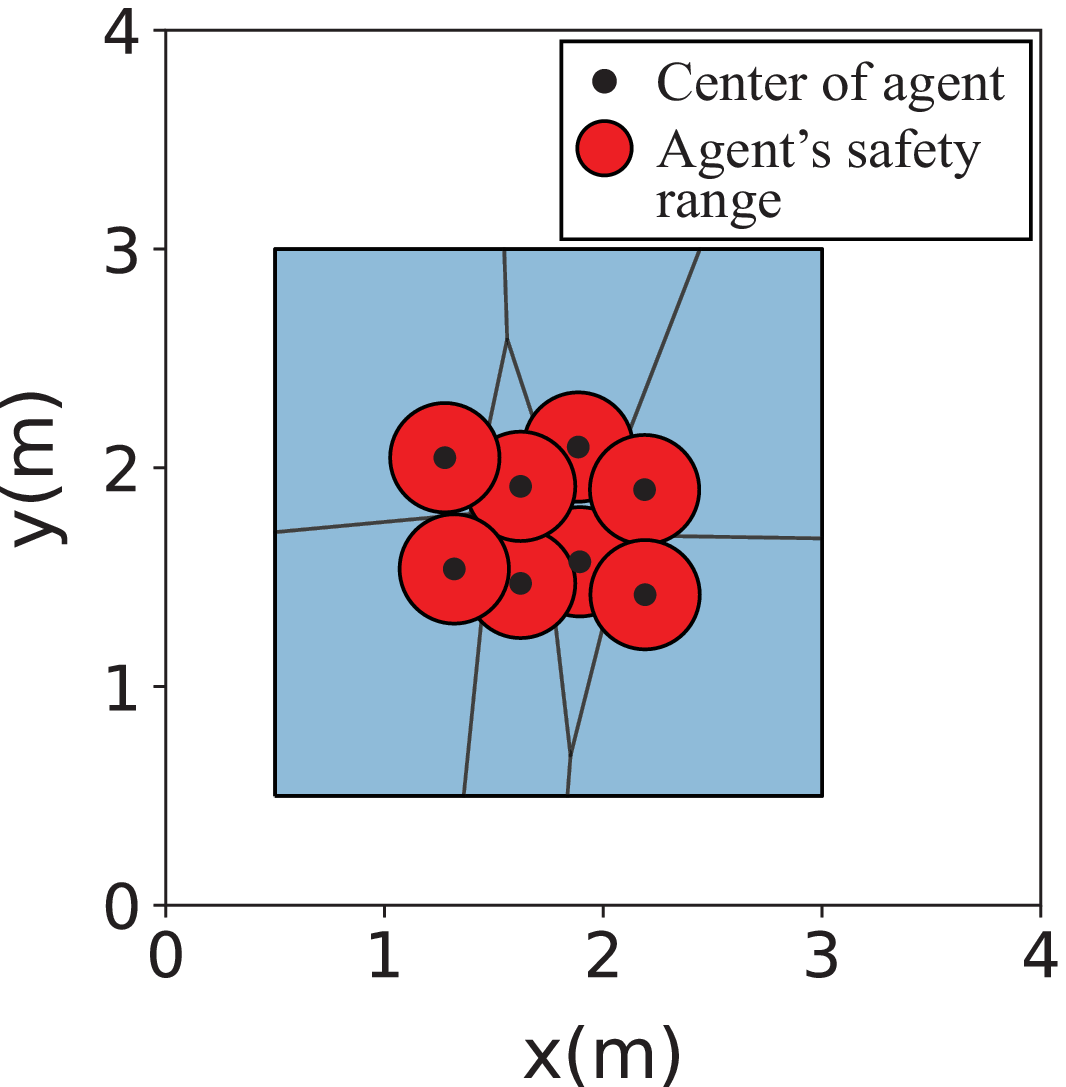}
}\hspace{0.2cm}
    \subfigure[]{
    \includegraphics[height=4cm]{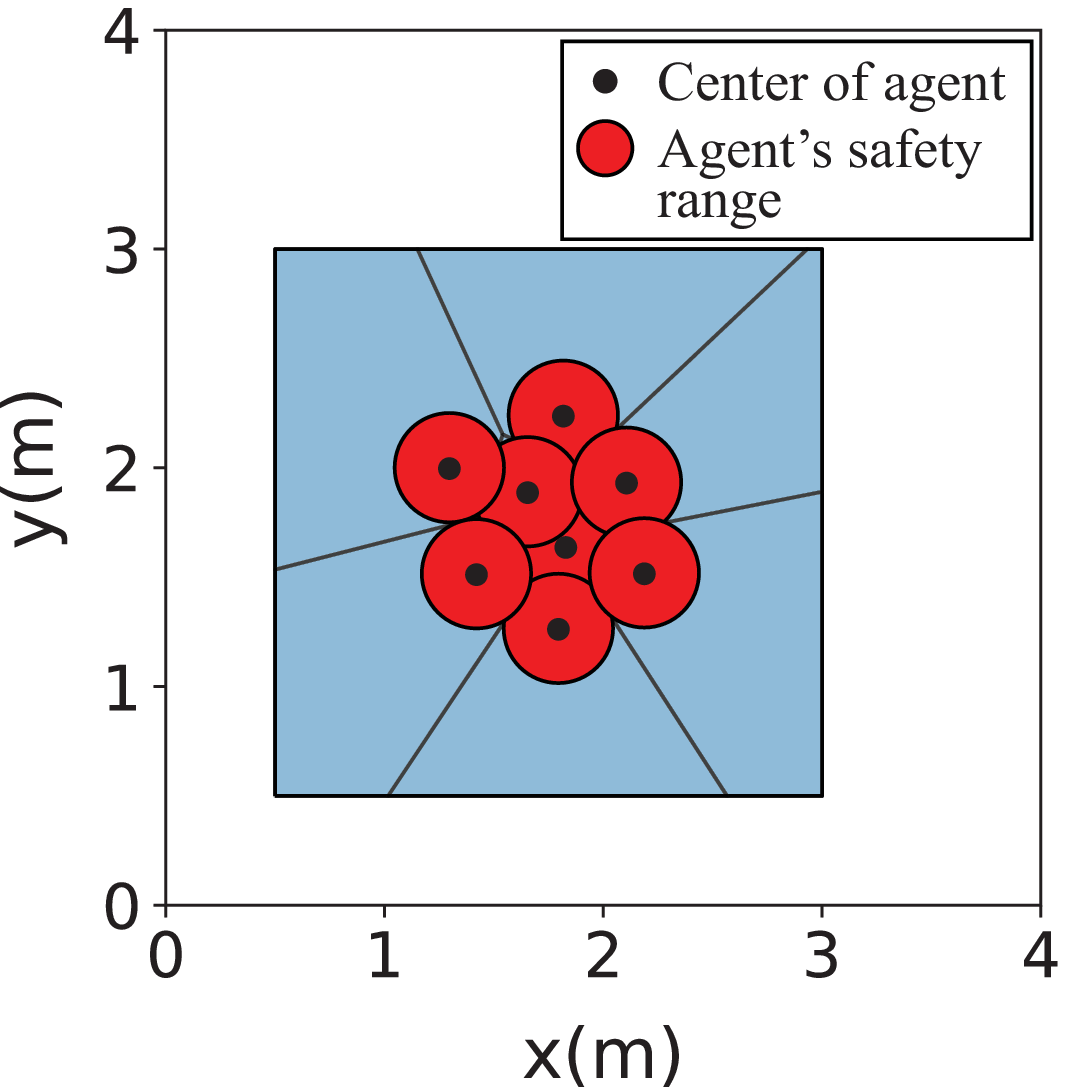}
}
    \caption{Snapshots for the multi-agent coverage under nominal control (without CBF): (a) t=0s, (b) t=1s, (c) t=10s, (d) t=30s.
    %The black and red dots respectively denote the positions of the desired and the actual positions of the agents. The blue area represents the dynamic region to be covered.
    }
\label{snapshots_cvt}
\end{figure*}

\begin{figure*}[ht]
    \centering
    \subfigure[\label{ini_vary}]{
    \includegraphics[height=4cm]{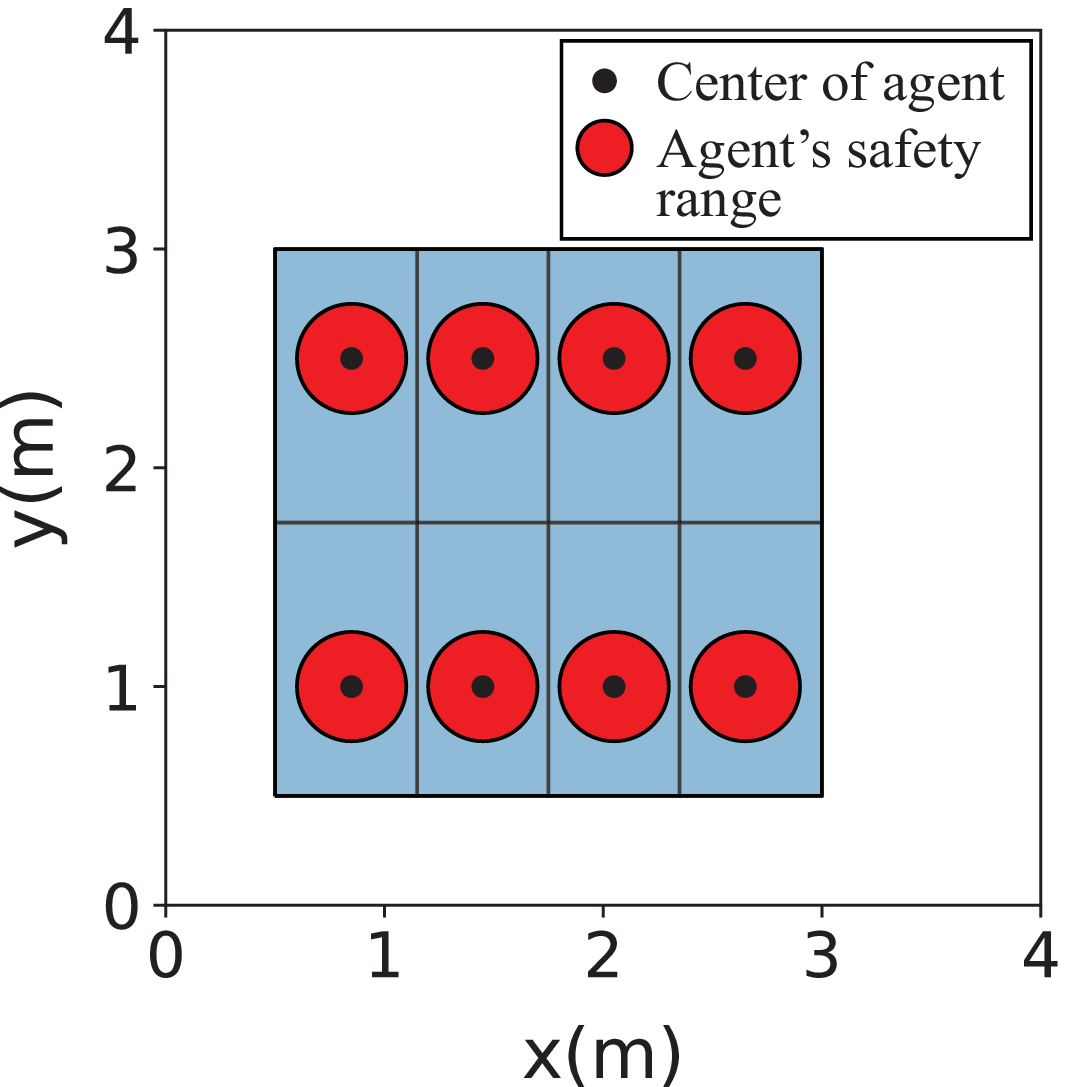}
}\hspace{0.2cm}
    \subfigure[]{
    \includegraphics[height=4cm]{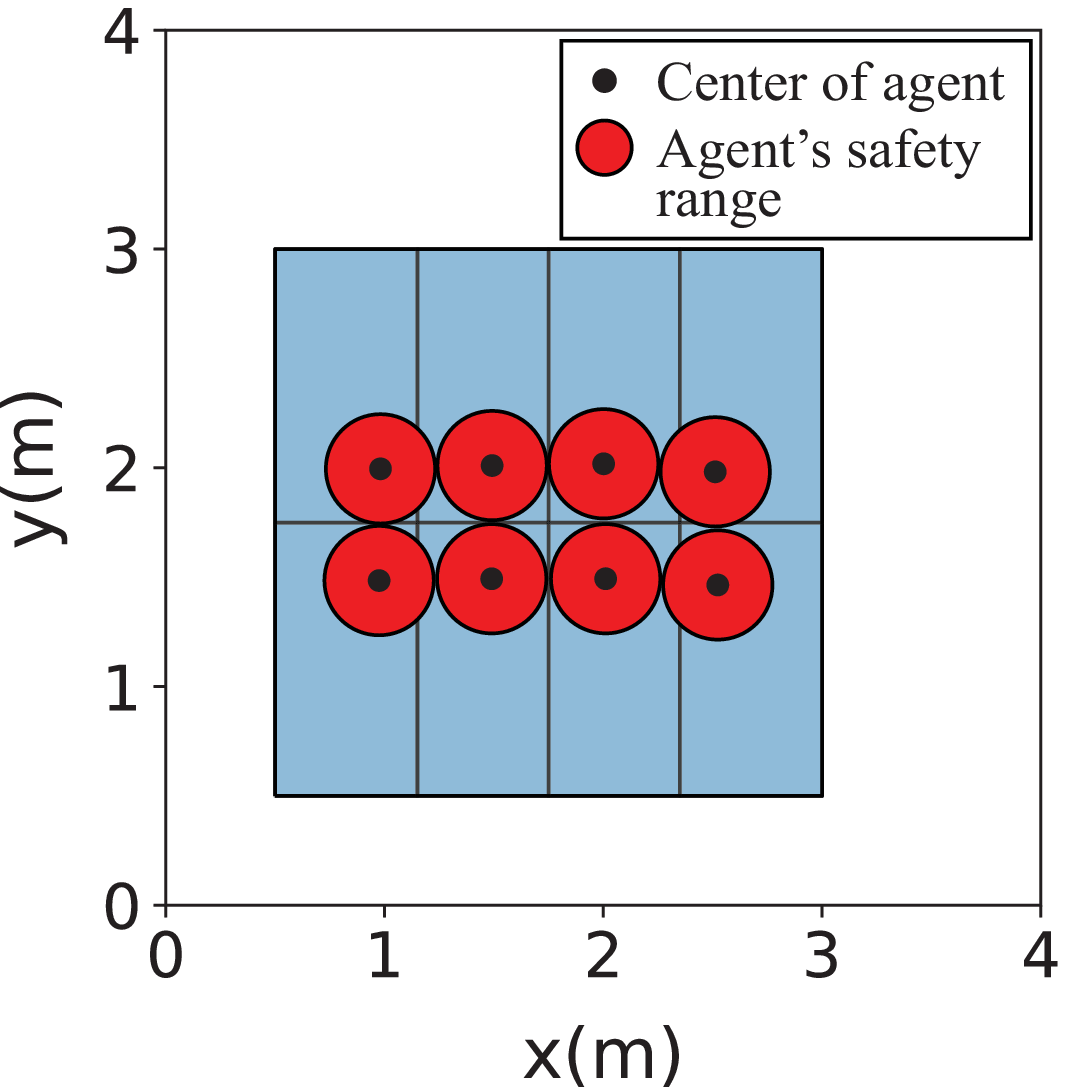}
}\hspace{0.2cm}
    \subfigure[]{
	\includegraphics[height=4cm]{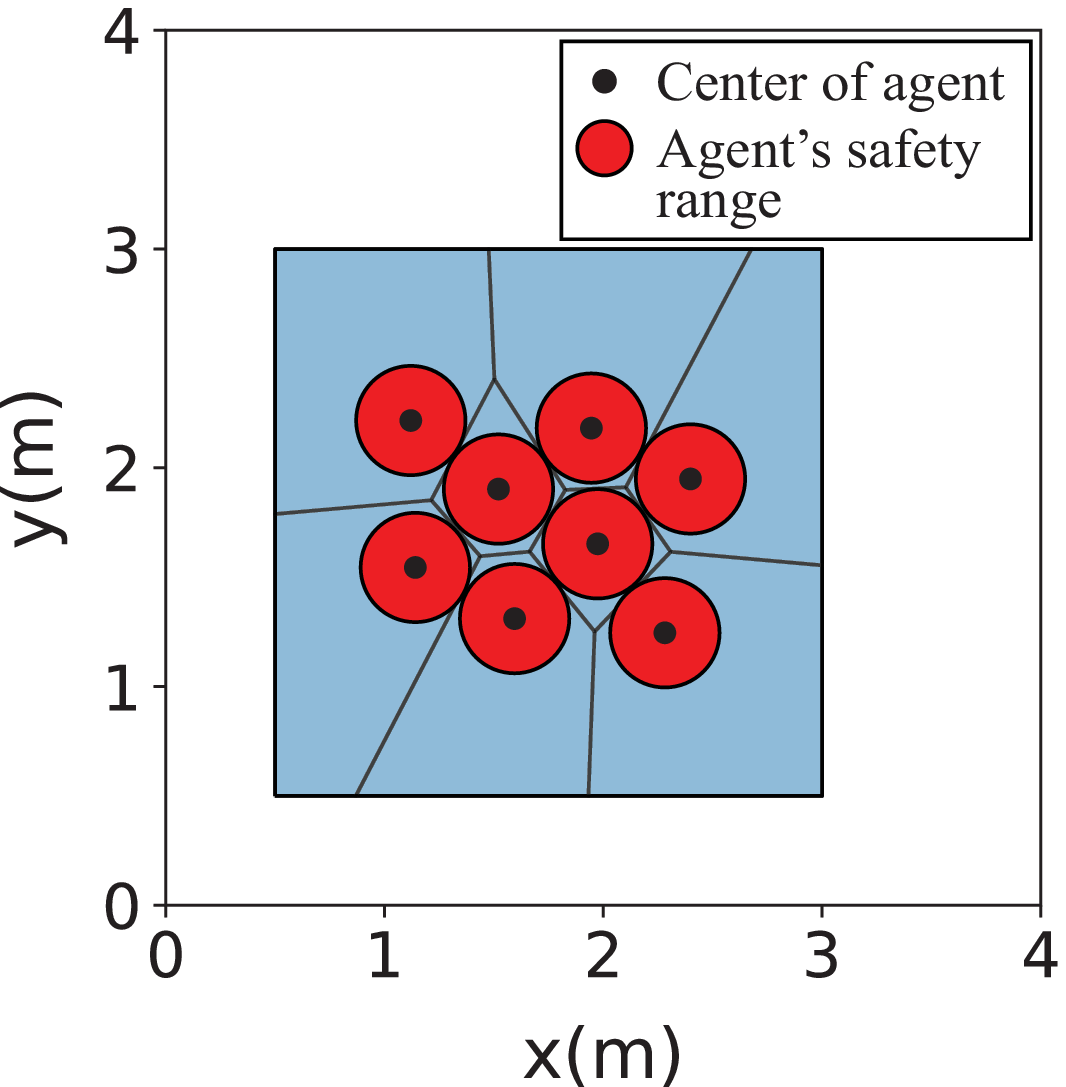}
}\hspace{0.2cm}
    \subfigure[]{
    \includegraphics[height=4cm]{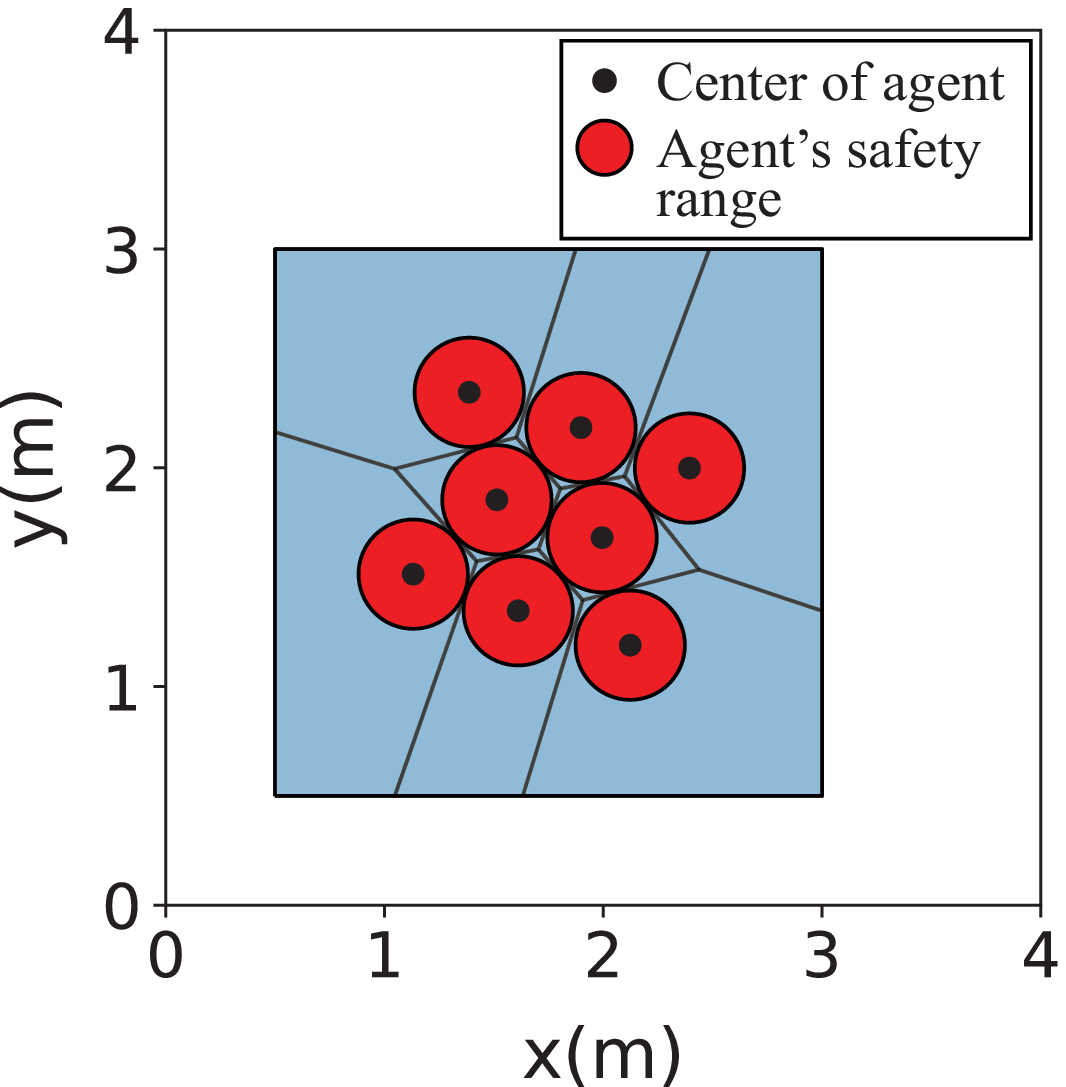}
}
    \caption{Snapshots for the multi-agent coverage under the CBF based control: (a) t=0s, (b) t=1s, (c) t=10s, (d) t=30s.
    %The black and red dots respectively denote the positions of the desired and the actual positions of the agents. The blue area represents the dynamic region to be covered.
    }
\label{snapshots_cbf}
\end{figure*}

\begin{figure*}[!ht]
    \centering
    \subfigure[\label{dismin_cvt}]{
    \includegraphics[height=3.6cm]{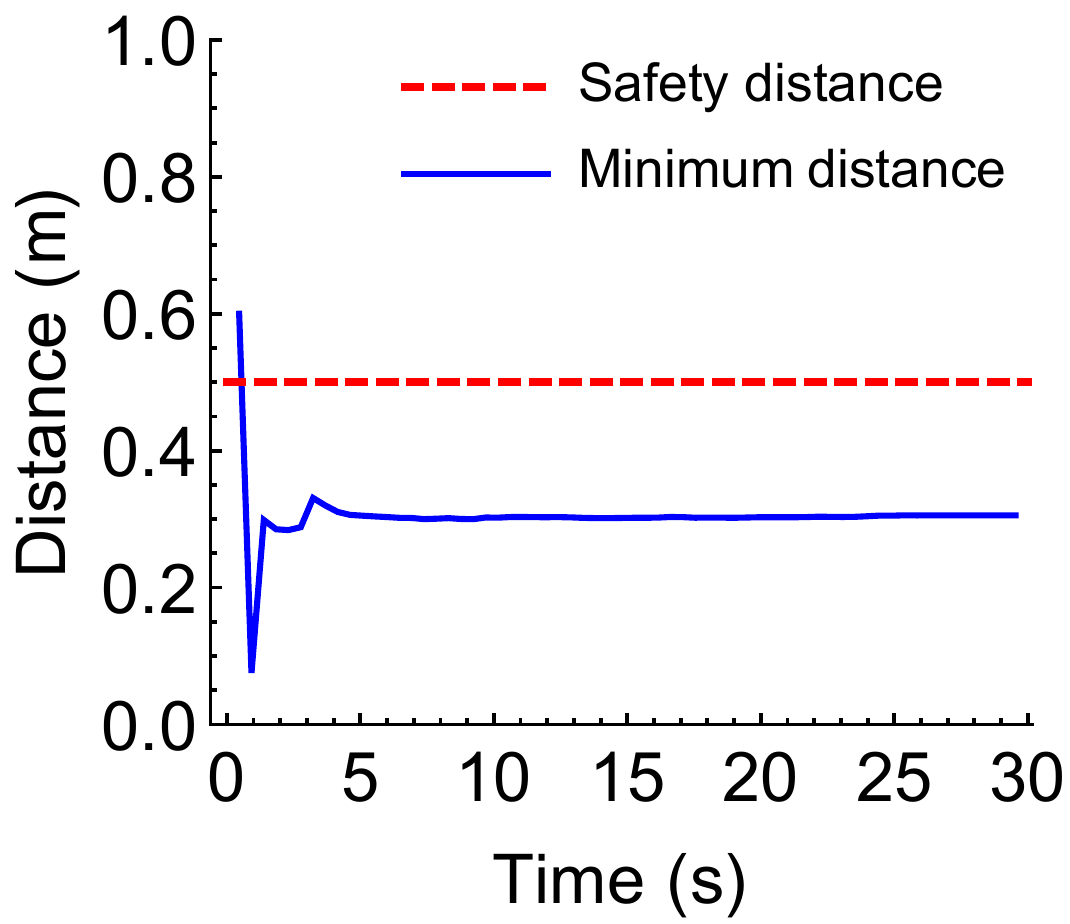}
}
%\hspace{2.8cm}
    \subfigure[\label{path_cvt}]{
    \includegraphics[height=3.6cm]{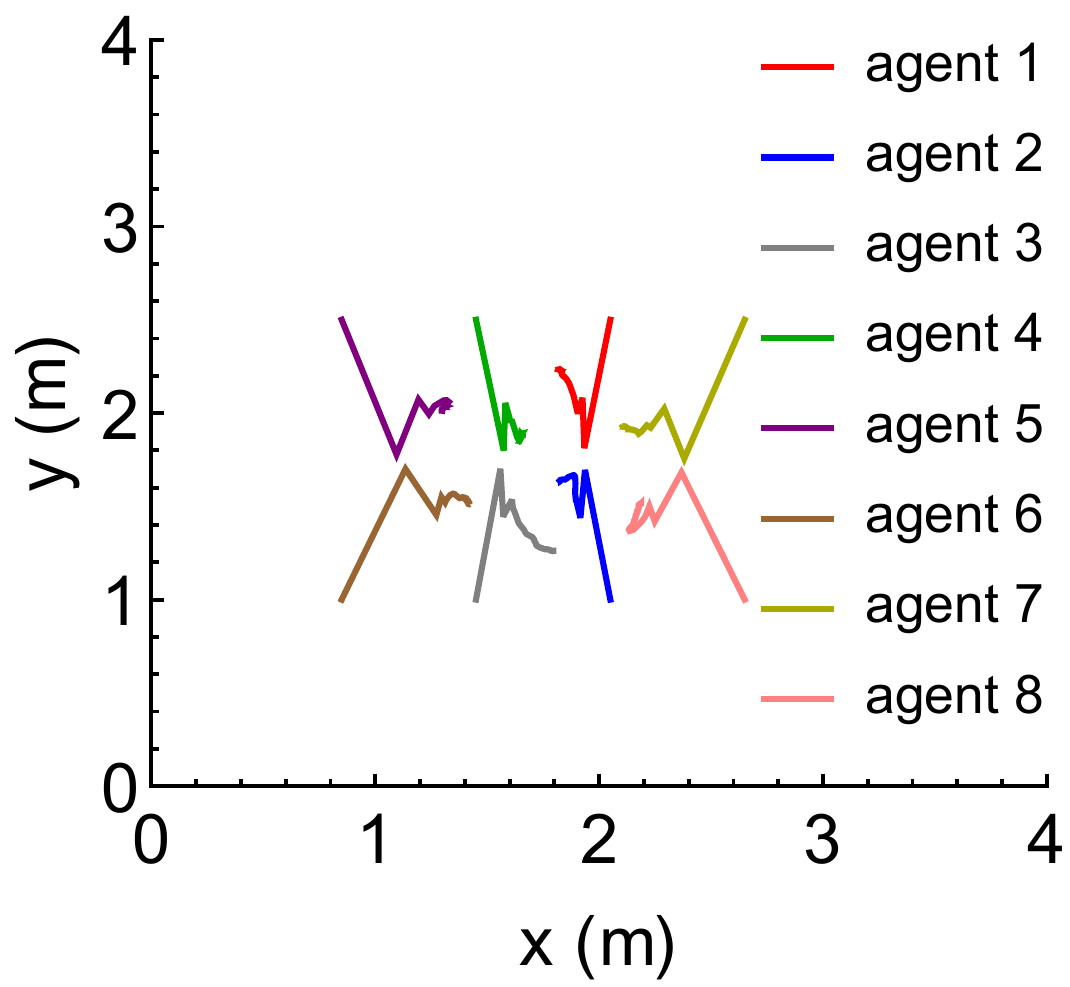}
}
%\hspace{2.8cm}
    \subfigure[\label{dismin_cbf}]{
    \includegraphics[height=3.6cm]{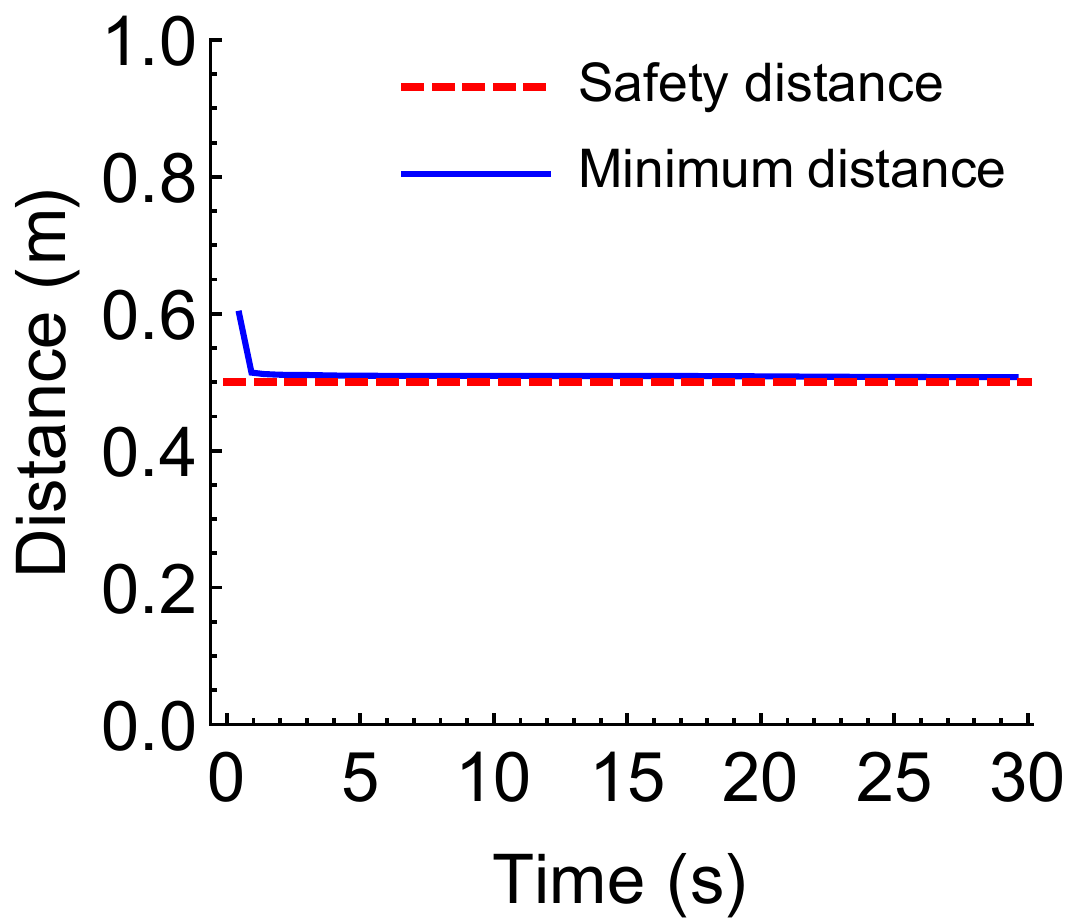}
}
%\hspace{2.8cm}
    \subfigure[\label{path_cbf}]{
    \includegraphics[height=3.6cm]{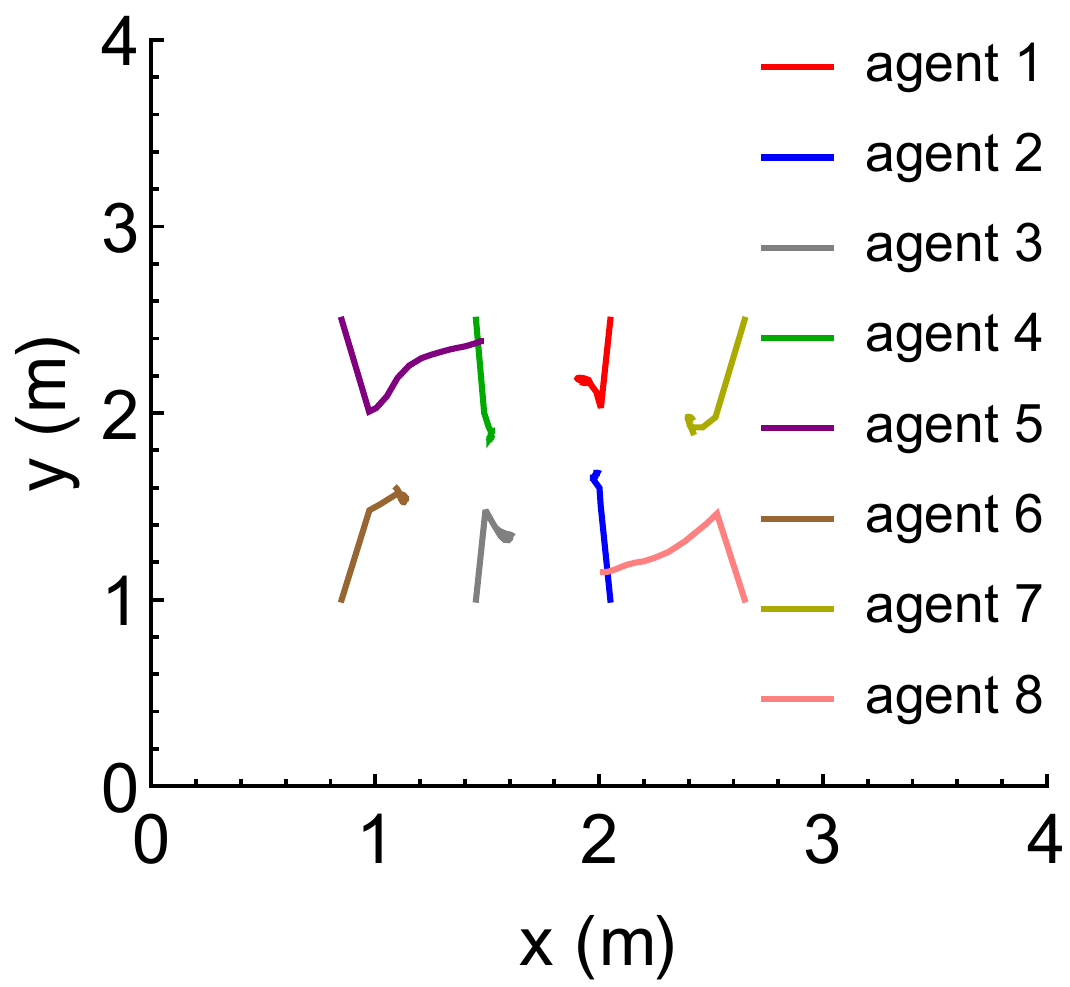}
}
    \caption{(a) Minimum distance between all pairs of agents' centers under nominal control and (b) corresponding paths. (c) Minimum distance under the CBF based control and (d) corresponding paths.}
\label{fig3}
\end{figure*}

\section{Conclusions}\label{sec:concl}
A safe adaptive coverage problem has been studied in the presence of actuator faults and time-varying uncertainties. An optimal coverage configuration for a multi-agent system has been generated though CVT. The agents have been adaptively driven to the generated configuration by the proposed FAT based controller, and collision avoidance between agents has been guaranteed by a CBF based design. The proposed controller has been verified under simulations and we also plan to test its validity by experiments in the future work.
%
%The following issues need to be clarified in the future work. Firstly, we chose the Fourier functions to for estimating the system uncertainties. However, the advantages and disadvantages for this type of basis function are not analyzed and more candidates need to be investigated. Thirdly, the time-varying density function will be addressed. It could also be treated as uncertainties to the multi-agent system.

%\section*{Appendix}

%\bibliographystyle{IEEEtran}
%\bibliography{Reference/ref}

\begin{thebibliography}{1}

\bibitem{Cortes_04}
J.~{Cortes}, S.~{Martinez}, T.~{Karatas}, and F.~{Bullo}, ``Coverage control
  for mobile sensing networks,'' \emph{IEEE Transactions on Robotics and
  Automation}, vol.~20, no.~2, pp. 243--255, 2004.

\bibitem{Lee_15}
S.~G. {Lee}, Y.~{Diaz-Mercado}, and M.~{Egerstedt}, ``Multirobot control using
  time-varying density functions,'' \emph{IEEE Transactions on Robotics},
  vol.~31, no.~2, pp. 489--493, 2015.

\bibitem{Miah_17b}
S.~{Miah}, M.~M.~H. {Fallah}, and D.~{Spinello}, ``Non-autonomous coverage
  control with diffusive evolving density,'' \emph{IEEE Transactions on
  Automatic Control}, vol.~62, no.~10, pp. 5262--5268, 2017.

\bibitem{Chevet_19}
T.~{Chevet}, C.~S. {Maniu}, C.~{Vlad}, and Y.~{Zhang}, ``Guaranteed
  voronoi-based deployment for multi-agent systems under uncertain
  measurements,'' in \emph{2019 18th European Control Conference (ECC)}, 2019,
  pp. 4016--4021.

\bibitem{Alessia_20}
A.~Benevento, M.~Santos, G.~Notarstefano, K.~Paynabar, M.~Bloch, and
  M.~Egerstedt, ``Multi-robot coordination for estimation and coverage of
  unknown spatial fields,'' in \emph{2020 IEEE International Conference on
  Robotics and Automation (ICRA)}, 2020, pp. 7740--7746.

\bibitem{Li_20}
S.~{Li}, B.~{Li}, J.~{Yu}, L.~{Zhang}, A.~{Zhang}, and K.~{Cai},
  ``Probabilistic threshold k-ann query method based on uncertain voronoi
  diagram in internet of vehicles,'' \emph{IEEE Transactions on Intelligent
  Transportation Systems}, pp. 1--11, 2020.

\bibitem{Zhou_17}
D.~Zhou, Z.~Wang, S.~Bandyopadhyay, and M.~Schwager, ``Fast, on-line collision
  avoidance for dynamic vehicles using buffered voronoi cells,'' \emph{IEEE
  Robotics and Automation Letters}, vol.~2, no.~2, pp. 1047--1054, 2017.

\bibitem{Wang_18_BVC}
M.~Wang, Z.~Wang, S.~Paudel, and M.~Schwager, ``Safe distributed lane change
  maneuvers for multiple autonomous vehicles using buffered input cells,'' in
  \emph{2018 IEEE International Conference on Robotics and Automation (ICRA)},
  2018, pp. 4678--4684.

\bibitem{Pierson_20}
A.~Pierson, W.~Schwarting, S.~Karaman, and D.~Rus, ``Weighted buffered voronoi
  cells for distributed semi-cooperative behavior,'' in \emph{2020 IEEE
  International Conference on Robotics and Automation (ICRA)}, 2020, pp.
  5611--5617.
  
\bibitem{Bai_22a}
Y.~Bai, Y.~Wang, M.~Svinin, E.~Magid, and R.~Sun, ``Adaptive multi-agent
  coverage control with obstacle avoidance,'' \emph{IEEE Control Systems
  Letters}, vol.~6, pp. 944--949, 2022.

\bibitem{ames2016control}
A.~D. Ames, X.~Xu, J.~W. Grizzle, and P.~Tabuada, ``Control barrier function
  based quadratic programs for safety critical systems,'' \emph{IEEE
  Transactions on Automatic Control}, vol.~62, no.~8, pp. 3861--3876, 2016.

\bibitem{garg2021robust}
K.~Garg and D.~Panagou, ``Robust control barrier and control lyapunov functions
  with fixed-time convergence guarantees,'' in \emph{2021 American Control
  Conference (ACC)}.\hskip 1em plus 0.5em minus 0.4em\relax IEEE, 2021, pp.
  2292--2297.

\bibitem{nguyen2021robust}
Q.~Nguyen and K.~Sreenath, ``Robust safety-critical control for dynamic
  robotics,'' \emph{IEEE Transactions on Automatic Control}, 2021.

\bibitem{verginis2021safety}
C.~K. Verginis, F.~Djeumou, and U.~Topcu, ``Safety-constrained learning and
  control using scarce data and reciprocal barriers,'' \emph{arXiv preprint
  arXiv:2105.06526}, 2021.

\bibitem{buch2021robust}
J.~Buch, S.-C. Liao, and P.~Seiler, ``Robust control barrier functions with
  sector-bounded uncertainties,'' \emph{IEEE Control Systems Letters}, 2021.

\bibitem{taylor2020adaptive}
A.~J. Taylor and A.~D. Ames, ``Adaptive safety with control barrier
  functions,'' in \emph{American Control Conference (ACC)}.\hskip 1em plus
  0.5em minus 0.4em\relax IEEE, 2020, pp. 1399--1405.

\bibitem{lopez2020robust}
B.~T. Lopez, J.-J.~E. Slotine, and J.~P. How, ``Robust adaptive control barrier
  functions: An adaptive and data-driven approach to safety,'' \emph{IEEE
  Control Systems Letters}, vol.~5, no.~3, pp. 1031--1036, 2020.

\bibitem{isaly2021adaptive}
A.~Isaly, O.~S. Patil, R.~G. Sanfelice, and W.~E. Dixon, ``Adaptive safety with
  multiple barrier functions using integral concurrent learning,'' in
  \emph{American Control Conference (ACC)}.\hskip 1em plus 0.5em minus
  0.4em\relax IEEE, 2021, pp. 3719--3724.

\bibitem{cohen2022high}
M.~H. Cohen and C.~Belta, ``High order robust adaptive control barrier
  functions and exponentially stabilizing adaptive control lyapunov
  functions,'' \emph{arXiv preprint arXiv:2203.01999}, 2022.

\bibitem{bechlioulis2008robust}
C.~P. Bechlioulis and G.~A. Rovithakis, ``Robust adaptive control of feedback
  linearizable mimo nonlinear systems with prescribed performance,'' \emph{IEEE
  Transactions on Automatic Control}, vol.~53, no.~9, pp. 2090--2099, 2008.

\bibitem{liu2017barrier}
Y.-J. Liu and S.~Tong, ``Barrier lyapunov functions for nussbaum gain adaptive
  control of full state constrained nonlinear systems,'' \emph{Automatica},
  vol.~76, pp. 143--152, 2017.

\bibitem{Xu2015ADHS}
X.~Xu, P.~Tabuada, A.~Ames, and J.~Grizzle, ``Robustness of control barrier
  functions for safety critical control,'' in \emph{IFAC Conference on Analysis
  and Design of Hybrid Systems}, vol.~48, no.~27, 2015, pp. 54--61.

\bibitem{huang2001sliding}
A.-C. Huang and Y.-S. Kuo, ``Sliding control of non-linear systems containing
  time-varying uncertainties with unknown bounds,'' \emph{International Journal
  of Control}, vol.~74, no.~3, pp. 252--264, 2001.

\bibitem{Zirkohi_18}
M.~M. Zirkohi, ``Direct adaptive function approximation techniques based
  control of robot manipulators,'' \emph{Journal of Dynamic Systems,
  Measurement, and Control}, vol. 140, no.~1, September 2017.

\bibitem{Schwager_09}
M.~Schwager, D.~Rus, and J.-J. Slotine, ``Decentralized, adaptive coverage
  control for networked robots,'' \emph{The International Journal of Robotics
  Research}, vol.~28, no.~3, pp. 357--375, 2009.

\bibitem{Schwager_17}
M.~{Schwager}, M.~P. {Vitus}, S.~{Powers}, D.~{Rus}, and C.~J. {Tomlin},
  ``Robust adaptive coverage control for robotic sensor networks,'' \emph{IEEE
  Trans. Control. Netw. Syst.}, vol.~4, no.~3, pp. 462--476, 2017.

\bibitem{Bai_20a}
Y.~{Bai}, Y.~{Wang}, M.~{Svinin}, E.~{Magid}, and R.~{Sun}, ``Function
  approximation technique based immersion and invariance control for unknown
  nonlinear systems,'' \emph{IEEE Control Systems Letters}, vol.~4, no.~4, pp.
  934--939, 2020.

\bibitem{Wang_21}
Y.~Wang, Y.~Bai, and M.~Svinin, ``Function approximation technique based
  adaptive control for chaos synchronization between different systems with
  unknown dynamics,'' \emph{International Journal of Control, Automation and
  Systems}, vol.~19, pp. 2611--2621, 2021.

\bibitem{Ye_06}
D.~Ye and G.-H. Yang, ``Adaptive fault-tolerant tracking control against
  actuator faults with application to flight control,'' \emph{IEEE Transactions
  on Control Systems Technology}, vol.~14, no.~6, pp. 1088--1096, 2006.

\bibitem{Liu_17}
Z.~Liu, C.~Yuan, and Y.~Zhang, ``Active fault-tolerant control of unmanned
  quadrotor helicopter using linear parameter varying technique,''
  \emph{Journal of Intelligent \& Robotic Systems}, vol.~88, pp. 415--436,
  2017.

\bibitem{lavretsky2011projection}
E.~Lavretsky and T.~E. Gibson, ``Projection operator in adaptive systems,''
  \emph{arXiv preprint arXiv:1112.4232}, 2011.

\bibitem{khalil2002nonlinear}
H.~K. Khalil, \emph{Nonlinear systems}, 3rd~ed.\hskip 1em plus 0.5em minus
  0.4em\relax Prentice hall, Upper Saddle River, NJ, 2002.

\end{thebibliography}

\end{document}